\theoremstyle{plain}
\newtheorem{thm}{Theorem}[section]
\newtheorem{letthm}{Theorem}
\newtheorem{letcor}[letthm]{Corollary}
\newtheorem{lem}[thm]{Lemma}
\newtheorem{prop}[thm]{Proposition}
\newtheorem{cor}[thm]{Corollary}
\theoremstyle{definition}
\newtheorem{defi}[thm]{Definition}
\newtheorem{question}[thm]{Question}
\theoremstyle{remark}
\newtheorem{eg}[thm]{Example}
\newtheorem{rmk}[thm]{Remark}
\newcommand{\bb}[1]{{\mathbb{#1}}}
\newcommand{\mc}[1]{{\mathcal{#1}}}
\newcommand{\wt}[1]{{\widetilde{#1}}}
\def\D{\bb{D}}
\def\Z{\bb{Z}}
\def\N{\bb{N}}
\def\C{\bb{C}}
\def\R{\bb{R}}
\def\P{\bb{P}}
\def\H{\mc{H}}
\def\cO{\mc{O}}
\def\frm{\mathfrak{m}}
\def\eps{\varepsilon}
\def\om{\omega}
\DeclareMathOperator{\mult} {mult}
\DeclareMathOperator{\PSL} {PSL}
\DeclareMathOperator{\PGL} {PGL}
\DeclareMathOperator{\Aff} {Aff}
\DeclareMathOperator{\ord} {ord}
\DeclareMathOperator{\aut} {Aut}
\DeclareMathOperator{\GL}{GL}
\DeclareMathOperator{\id}{id}
\DeclareMathOperator{\dist}{dist}
\DeclareMathOperator{\kod}{kod}
\newcommand{\abs}[1]{{\left|{#1}\right|}}
\newcommand{\on}[1]{{\operatorname{#1}}}
\title{Normal surface singularities admitting contracting automorphisms}
\date{\today}
\author[C. Favre]{Charles Favre}
\address{CNRS - Centre de Math\'ematiques Laurent Schwartz, 
\'Ecole Polytechnique, 
91128 Palaiseau Cedex, France}
\email{favre@math.polytechnique.fr}
\author[M. Ruggiero]{Matteo Ruggiero}
\address{Fondation Math\'ematique Jacques Hadamard - 
Centre de Math\'ematiques Laurent Schwartz, 
\'Ecole Polytechnique, 
91128 Palaiseau Cedex, France.}
\email{ruggiero@math.polytechnique.fr}
\thanks{First author is supported by the ERC-starting grant project "Nonarcomp" no.307856}
\begin{document}

\begin{abstract}
We show that a complex normal surface singularity admitting a contracting automorphism is necessarily quasihomogeneous.
We also describe the geometry of a compact complex surface arising as the orbit space of such a contracting automorphism.
\end{abstract}

\maketitle
\tableofcontents

\pagebreak


\section{Introduction}

Recent interest arose in describing complex analytic spaces carrying interesting holomorphic dynamical systems.
Automorphisms of compact complex surfaces with non-trivial dynamics have been classified by Gizatullin~\cite{gizatullin:rationalGsurfaces} and Cantat~\cite{cantat:dynamiqueautosurfproj}.
Endomorphisms of compact surfaces have been analyzed in detail in a  recent work of Nakayama~\cite{nakayama:complexnormalprojsurfnonisosurjendo}.
In higher dimensions the situation is less clear, but many progress have been realized in the case of projective varieties \cite{nakayama-zhang:buildingblocksetaleendocplxprojmfld,nakayama-zhang:polarizedendocplxnormalvar,zhang:algvarautogroupsmaxrank}, see also~\cite{horing-peternell:nonalgcpctkahler3fldsendo}.

\smallskip

Here we  focus our attention to a local version of this problem, and more specifically we consider a self-map $f : (Y,0) \to (Y,0)$ of a complex normal \emph{surface} singularity.
Any such space admits non finite self-maps with non-trivial dynamics 
so it is necessary to impose suitable restrictions on $f$ in order to be able to say something on the geometry of the singularity. 
When $f$ is non-invertible and finite, a theorem of Wahl~\cite{wahl:charnumlinksurfsing} states that up to a finite cover
then $(Y,0)$ is either smooth,  simple elliptic or a cusp singularity.
When $f$ is merely assumed to be an automorphism, nothing can be said on $(Y,0)$ 
even if $f$ has infinite order. It is a theorem of M\"uller~\cite[p.230--231]{muller:liegroupsanalyticalgebras} that $(Y,0)$ carries $m$ analytic vector fields  that are in involution and linearly independent for any fixed integer $m\ge1$.
In particular, the automorphism group $\aut(Y)$ is always ``infinite dimensional''. 

We propose here to give a classification of automorphisms of complex normal surface singularities that are \emph{contracting}, in the sense that there exists an open neighborhood $U$ of the singularity $(Y,0)$ whose image by the automorphism is relatively compact in $U$, and $\bigcap_{n=0}^\infty f^n(U)=\{0\}$. 

\smallskip

Before stating our main theorem let us describe some examples of contracting automorphisms.
Pick $(w_1, \ldots , w_n) \in (\N^*)^n$ and suppose we are given a family of weighted homogeneous polynomials $P_1, \ldots , P_k$ satisfying $P_i ( t^{w_1} x_1, \ldots, t^{w_n} x_n) = t^{d_i} P_i(x_1, \ldots, x_n)$ for all $t\in \C^*$ and all $x= (x_1, \ldots , x_n) \in \C^n$ and for some $d_i\in \N^*$.
The map  $f_t (x) = ( t^{w_1} x_1, \ldots, t^{w_n} x_n)$ is then an automorphism that is contracting as soon as  $|t| < 1$ and leaves the analytic space $Y := \bigcap_{i=1}^k \{ P_i = 0 \}$ invariant. 
Such singularities are known as weighted homogeneous (or quasihomogeneous) singularities.
They are characterized as those normal singularities carrying an effective action of $\C^*$ such that all orbits contain the singular point in its closure.
We refer to the survey of Wagreich~\cite{wagreich:structurequasihomogensing} for more detail. 

Our main result can be stated as follows.
\begin{letthm}\label{thm:main}
Suppose $(Y,0)$ is a complex normal surface singularity, and $f: (Y,0) \to (Y,0)$ is a contracting automorphism. 

Then $(Y,0)$ is a weighted homogeneous singularity and 
when $(Y,0)$ is not a  cyclic quotient singularity we have $f^N = f_t$ for some integer $N\ge1$ and some $|t|<1$.
\end{letthm}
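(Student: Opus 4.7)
The plan is to pass to the minimal good resolution $\pi : X \to Y$ with reduced exceptional divisor $E = \sum_i E_i$ and dual graph $\Gamma$, lift $f$ to a biholomorphism of $X$, and exploit the contracting dynamics on a neighborhood of $E$ to force $\Gamma$ into the shape characteristic of a weighted homogeneous singularity (a chain for cyclic quotients, a star-shaped graph otherwise, by the Orlik--Wagreich classification).

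By canonicity of the minimal good resolution, $f$ lifts to an automorphism $F : X \to X$ with $F(E) = E$, inducing an automorphism of the finite graph $\Gamma$; after replacing $f$ by a power we may assume $F$ fixes each $E_i$. Analyzing $F|_{E_i} \in \aut(E_i)$: if $g(E_i) \ge 1$, or if $E_i \cong \P^1$ has three or more neighbors in $\Gamma$, then $F|_{E_i}$ fixes either infinitely many points or at least three of them, so after one more iterate it is the identity; otherwise $E_i \cong \P^1$ has valence at most $2$ and $F|_{E_i}$ may be a nontrivial Möbius transformation fixing the one or two nodes.

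The contracting hypothesis then enters decisively. Because $f$ has no periodic points other than $0$ near the singularity, $F$ has no fixed points on $X \setminus E$, and at each fixed point $p \in E_i$ of $F$ the differential $dF_p$ splits into a tangential eigenvalue (given by $F|_{E_i}$) and a normal eigenvalue $\lambda$. On a pointwise fixed $E_i$, contraction forces $|\lambda| < 1$, while at a node $p = E_i \cap E_j$ lying on a chain of $\P^1$'s the normal eigenvalues at consecutive fixed nodes satisfy explicit multiplicative relations. Combined with the negative-definiteness of the intersection matrix of $E$, these constraints can be simultaneously satisfied only when at most one vertex of $\Gamma$ has valence $\ge 3$, so $\Gamma$ is a chain or a star. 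In the star-shaped case, one then integrates the normal eigenvalue $t$ at the central component $E_0$ into a formal commuting $\C^*$-action on a neighborhood of $E$, uses the contraction to prove convergence, and descends along $\pi$ to obtain the desired $\C^*$-action on $Y$ with $f^N = f_t$. The main obstacle is the passage from contraction to combinatorics: isolating the right eigenvalue relation at each node of $\Gamma$ and using it to exclude every graph with two or more high-valence vertices is where the dynamical and geometric inputs must be delicately combined.
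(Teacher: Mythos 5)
Your proposal follows the same broad outline as the paper's argument (resolve, lift, analyze the eigenvalues at fixed points to force the dual graph to be star-shaped, then construct a $\C^*$-action), but it leaves out the two technical ingredients that make the middle step work and it departs from the paper in the final step. First, the engine of the eigenvalue analysis is not negative-definiteness of the intersection matrix: cusp singularities have negative-definite matrices and cyclic dual graphs, so you must rule cycles out by some other mechanism. The paper does this by building a local Green function from a log-resolution of $\frm$, which yields at every fixed node $p=E\cap E'$ the weighted inequality
$$
\frac{\log\abs{dF|_E(p)}}{a_E} + \frac{\log\abs{dF|_{E'}(p)}}{a_{E'}} < 0,\qquad a_E=\ord_E(\pi^*\frm),
$$
and telescoping this around a cycle gives $0>0$ (Lemma~\ref{lem:eigenvalonintersections}, Proposition~\ref{prop:nocusps}). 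Your unspecified ``multiplicative relations'' do not isolate the weights $a_E$, and without them the cycle case is not excluded; moreover the passage from ``one non-hyperbolic component'' to ``all others hyperbolic, hence valence $\le 2$'' needs the propagation argument of Proposition~\ref{prop:dualgraphstructure}, which your sketch asserts but does not supply. Second, your claim that $g(E_i)\ge 1$ forces $F|_{E_i}$ to fix infinitely many or at least three points is false for genus $1$ (a translation has no fixed points, and an automorphism fixing three points need not be the identity outside genus $0$); the paper instead shows finiteness of order for an elliptic $E_\star$ by using that its normal bundle has negative degree and is $F$-invariant (Lemma~\ref{lem:ellipticfiniteorder}).

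Your final step — integrating the normal eigenvalue into a formal $\C^*$-action and then proving convergence — is a genuinely different route from the paper's. The paper instead contracts the arms to Hirzebruch--Jung points, linearizes a tubular neighborhood of the central component via the stable foliation and Koenigs' theorem (Theorem~\ref{thm:HJcurvefixedpoints}), reads off an orbibundle structure over $E'_\star$ with orbifold weights given by those quotient singularities, and applies the classification of orbibundles over good orbifolds (Theorem~\ref{thm:class-orbiline}) to realize everything as a finite quotient of a genuine line bundle, which carries the required $\C^*$-action for free. Your alternative might be made to work, but the convergence of a formal $\C^*$-action and the bookkeeping around the quotient singularities on $E'_\star$ are exactly the difficulties the orbibundle method is designed to tame, and your sketch does not address them.
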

We refer to Theorem~\ref{thm:class-admissibledata} for a more precise statement.

Up to an iterate, any contracting automorphism on a  cyclic quotient singularity arises as follows, see the Appendix~A. 
One can write $(Y,0)$ as the quotient of 
$\C^2$ by the automorphism  $\gamma (x,y) = (\zeta x, \zeta^q y)$ for some $m$-th root of unity $\zeta$ with $q$ coprime with $m$; 
and $f$ can be lifted to $\C^2$ under the form
$\wt{f}(z,w)= (\alpha z , \beta w + \eps z^k)$ with $\eps \in \{0,1\}$, $0< |\alpha|, |\beta| < 1$, $\eps (\alpha^k - \beta) =0$, and $\wt{f} \circ \gamma = \gamma \circ \wt{f}$.
One can check that $f$ does not belong to the flow of a $\C^*$-action when $\beta^n \neq \alpha^m$ for all $n, m \in \N^*$.

\smallskip

If $f$ belongs to a reductive algebraic subgroup $G$ of $\aut(Y)$, then our result would follow 
since the Zariski closure of $f$ in $G$ is a subgroup isomorphic to $\C^*$ acting effectively on $Y$, and a theorem of Scheja and Wiebe~\cite[Satz 3.1]{scheja-wiebe:chevalleyzerlegungderiv} implies $Y$ to be a weighted homogeneous singularity, see~\cite[Theorem~1]{muller:symmetriessurfsing}.
If $f$ is the time-one map of the flow of a holomorphic vector field having a dicritical singularity at $0$,  our theorem is then due to Camacho, Movasati and Scardua~\cite{camacho-movasati-scardua:quasihomosteinsurfsing}.

\smallskip

In our situation, we only have a map at hands and we shall therefore  base our analysis
on the dynamical properties of this map. One can summarize our strategy as follows.

First we prove that the dual graph of the minimal resolution of $(Y,0)$ is star-shaped (Theorem~\ref{thm:starshaped}). 
When this graph is a chain of rational curves, then $(Y,0)$ is a cyclic quotient singularity and it is not difficult to conclude.
 Otherwise, the unique branched point of the dual graph is 
a component $E$ to which are attached finitely many chains of rational curves that we may contract to cyclic quotient singularities. Since $f$ is contracting, one can construct a natural 
foliation by stable disks that are all transversal to $E$. Linearizing $f$ on each stable disk allows one to endow them with a canonical linear structure, so that we may linearize the embedding of $E$ in the ambient surface. In other words, a tubular neighborhood of $E$ is analytically isomorphic to a neighborhood of the zero section in the total space of a suitable line bundle $L \to E$ of negative degree. This implies $(Y,0)$ to be a cone singularity, which implies our theorem. 

Actually one subtlety arises from the presence of quotient singularities on $E$. To deal with this problem, it is convenient to endow $E$ with a natural structure of orbifold.
One can linearize a tubular neighborhood of $E$ exactly as before, except that 
$L$ is in general only an orbibundle\footnote{line orbibundles are also known as line V-bundles (see e.g.~\cite{satake:gaussbonnetVmflds}).}.
The rest of the argument remains unchanged and we conclude that the singularity is weighted homogeneous.

\smallskip

Our dynamical approach gives an alternative proof of the result of Orlik and Wagreich~\cite{orlik-wagreich:isolatedsingagsurfCaction} that the dual graph of the minimal desingularization of a weighted homogeneous singularity is star-shaped.
The original approach of op. cit. was very much topological in nature and relied on the classification of $S^1$-action on Seifert $3$-manifolds, whereas the (short) proof given by M\"uller in ~\cite{muller:resolweighethomosurfsing} is based on Holmann's slice theorem~\cite[Satz 4]{holmann:quotkomplexautogrup} and uses in an essential way the $\C^*$-action on such a singularity. 

\smallskip

Since a contracting automorphism acts properly on a punctured neighborhood of the singularity, 
we may form the orbit space 
$S(f) := (Y\setminus \{ 0 \}  ) / \langle f \rangle$ which is a compact complex surface.
Our second result gives a precise description of the geometry of this surface.
To state it properly we recall some terminology.

A Hopf surface is a compact complex surface whose universal cover is isomorphic to $\C^2 \setminus \{ 0 \}$.
An isotrivial  elliptic fibration is a proper submersion $\pi:  S \to B$ from a compact complex surface to a Riemann surface
whose fibers are all isomorphic to a given elliptic curve $E$. When there is an action of $E$ on $S$ preserving $\pi$ which induces 
a translation on each fiber of $\pi$, then we say that $S$ is a principal elliptic fibre bundle.
A Kodaira surface is a non-K\"ahler compact surface of Kodaira dimension $0$, see e.g.~\cite[\S VI]{barth-hulek-peters-vanderven:compactcomplexsurfaces}.
Any Kodaira surface can be obtained as a principal elliptic fibre bundle over an elliptic base or as a quotient of it by a cyclic group.

\begin{letcor}\label{cor:mainsurfaces}
Let $f : (Y,0) \to (Y,0)$ be a contracting automorphism of a complex normal surface singularity.
\begin{enumerate}
\item 
The surface $S(f)$ is a non-K\"ahler compact complex surface.
\item
Either $S(f)$ is isomorphic to a Hopf surface; or $S(f)$ is the quotient of a principal elliptic fibre bundle $S$
by a finite group acting freely on $S$ and preserving the elliptic fibration.
\item
When $\kod (S(f)) =-\infty$, then $S(f)$ is a Hopf surface; 
when $\kod (S(f)) = 0$, it is a Kodaira surface;  otherwise $\kod (S(f)) = 1$.
\end{enumerate}
\end{letcor}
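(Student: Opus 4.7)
The starting observation is that the contracting hypothesis and the fact that $0$ is the only fixed point make $\langle f\rangle$ act freely on $Y\setminus\{0\}$, while the fundamental domain $\overline{U\setminus f(U)}$ is relatively compact in $Y\setminus\{0\}$, so the action is properly discontinuous. Hence $S(f)$ is automatically a compact complex surface, and the task reduces to identifying its biholomorphism type.

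\textbf{Structure of $S(f)$.} If $(Y,0)$ is a cyclic quotient, the appendix lifts $f$ to a contracting polynomial automorphism $\wt{f}$ of $\C^2$ commuting with a finite cyclic subgroup $G\subset\GL(2,\C)$ such that $Y=\C^2/G$. The group $\Gamma:=\langle G,\wt{f}\rangle$ then acts freely and properly discontinuously on $\C^2\setminus\{0\}$, so $S(f)=(\C^2\setminus\{0\})/\Gamma$ is by definition a Hopf surface. Otherwise Theorem~\ref{thm:main} gives $g:=f^N=f_t$ with $|t|<1$ inside the canonical $\C^*$-action on $Y$. The orbit map presents $Y\setminus\{0\}$ as a principal $\C^*$-orbibundle over a compact $1$-dimensional orbifold $B$, so after quotienting each $\C^*$-fibre by $\langle t\rangle$ one obtains an orbifold elliptic fibration $S(g)\to B$ with general fibre $E_t=\C^*/\langle t\rangle$. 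Selberg's lemma furnishes a finite Galois cover $B'\to B$ with $B'$ a smooth Riemann surface and deck group $H$; lifting along the orbit map yields a finite \'etale cover $\wt{Y}^*\to Y\setminus\{0\}$ on which the lifted $\C^*$-action is free, and then $S:=\wt{Y}^*/\langle\wt{g}\rangle$ is a genuine principal $E_t$-bundle over $B'$. Both $H$ and the cyclic group $\langle f\rangle/\langle g\rangle\simeq\Z/N$ act freely on $S$ preserving the fibration, and $S(f)=S/(H\cdot\Z/N)$. This proves~(2).

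\textbf{Non-K\"ahlerness.} The contracting radial retraction identifies $Y\setminus\{0\}$ diffeomorphically with $L\times\R$, where $L$ denotes the link of $(Y,0)$, in such a way that $f$ acts (up to isotopy) as a translation in the second factor. Therefore $S(f)$ is diffeomorphic to $L\times S^1$, which yields $b_1(S(f))=b_1(L)+1$. Since $(Y,0)$ is weighted homogeneous, $L$ is a Seifert $3$-manifold whose rational Euler number equals the self-intersection of the central curve in Theorem~\ref{thm:starshaped} and is therefore nonzero; consequently $b_1(L)=2g(B)$ and $b_1(S(f))$ is odd. A compact complex surface with odd first Betti number cannot be K\"ahler, giving~(1).

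\textbf{Kodaira dimension and main obstacle.} Since $S\to S(f)$ is a finite \'etale cover, $\kod(S(f))=\kod(S)$. If $g(B')=0$, then $S$ is a non-K\"ahler principal $E_t$-bundle over $\P^1$, hence a Hopf surface, and so is its free finite quotient $S(f)$; thus $\kod=-\infty$. If $g(B')=1$, $S$ is a non-K\"ahler principal elliptic bundle over an elliptic curve and therefore a Kodaira surface with $\kod=0$, and the same holds after the free finite quotient. If $g(B')\ge 2$, the canonical bundle of $S$ pulls back from the base and gives $\kod(S)=1$. This establishes~(3). The main obstacle in the whole argument is the structural step in the non-cyclic-quotient case: producing the orbifold-unramifying finite cover of $Y\setminus\{0\}$ and verifying that the ensuing deck-group and $\Z/N$ actions combine into a single free, fibration-preserving action on the principal elliptic bundle $S$; once this is in place, parts~(1) and~(3) fall out by standard topological and elliptic-fibration computations.
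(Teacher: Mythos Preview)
Your argument for parts~(b) and~(c) follows essentially the same route as the paper: reduce to the cyclic-quotient case (Hopf surface) versus the general weighted-homogeneous case, then in the latter realise $S(f^N)$ as a principal elliptic bundle by unramifying the base orbifold via Selberg's lemma, and finally read off the Kodaira dimension from the genus of the base. The paper packages this through its explicit Examples~\ref{nrm:HJ}--\ref{nrm:bundle} and Theorem~\ref{thm:class-orbiline}, but the content is the same. One small point you leave implicit: Selberg's lemma requires the base orbifold to be good, and you should note that a non-cyclic-quotient weighted-homogeneous singularity always has a good base orbifold (a bad one forces the resolution graph to be a chain, hence a cyclic quotient).

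For part~(a) you take a genuinely different path from the paper. The paper proves non-K\"ahlerness analytically: on the total space of the negative line bundle it builds a psh function $\psi$ with $\psi\circ F=\psi+\log|\alpha|$, so $d^c\psi$ descends to $S(f)$ and $T:=d(d^c\psi)$ is a nonzero exact closed positive $(1,1)$-current, which cannot coexist with a K\"ahler form. Your argument is purely topological, via oddness of $b_1$. This is an attractive alternative, but as written it contains a gap: the claim that $f$ itself acts on $L\times\R$ as a translation up to isotopy is false in general. In Example~\ref{nrm:3} the map $F$ covers a nontrivial finite-order automorphism $\phi$ of the base curve; when $\phi$ acts nontrivially on $H_1$ (e.g.\ the hyperelliptic involution on a genus~$\ge 2$ curve), the induced self-map of the link is not isotopic to the identity, and $S(f)$ is a genuine mapping torus with $b_1(S(f))\neq b_1(L)+1$. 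What \emph{is} true is that $f^N=f_t$ lies in the $\C^*$-flow, hence is isotopic to a pure radial translation, so $S(f^N)\cong L\times S^1$ has odd $b_1$ and is non-K\"ahler; then Lemma~\ref{lem:coverings} transfers non-K\"ahlerness down to $S(f)$ (a K\"ahler form would pull back). With this correction your topological argument goes through. Alternatively, one can keep $f$ and use the Wang sequence for the mapping torus together with the fact that a finite-order symplectic automorphism of $H_1(B;\Q)$ has even-dimensional fixed subspace, again giving $b_1(S(f))$ odd.

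A minor inaccuracy: the rational Euler number of the Seifert link is not the self-intersection of the central curve but the orbifold degree of the normal orbibundle; it is nonzero because the intersection form on the resolution is negative definite, which is what you need.
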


We observe that Corollary~\ref{cor:mainsurfaces} (a) and (c) can be obtained directly as follows.
Indeed,  any normal surface singularity admits a strongly pseudoconvex neighborhood $U$ by~\cite{grauert:ubermodifikationen}, and
the image of $\partial U$ on $S(f)$ defines a global strongly pseudoconvex shell in $S(f)$ in the sense of \cite{kato:cpctcplxsurfwithGSPH}.  Theorem p.538 of op. cit. thus applies and statements (a) and (c) of our
corollary follow.  Our proof however follows a very different path. 
We do not rely on Kodaira's classification of
compact complex surfaces, and show constructively the existence of an isotrivial elliptic fibration.

\medskip

It is tempting to ask whether the results of this paper can be generalized to higher dimensions. 
In particular we may ask the following questions.
\begin{question}
Is it true that any complex normal isolated singularity $(Y,0)$ admitting a contracting automorphism admits a proper $\C^*$-action?
\end{question}
A special interestingr case of the above question is the following
\begin{question}
Does a cusp singularity in the sense of Tsuchihashi support any contracting automorphism?
\end{question}
We refer to~\cite{oda:convexbodies,tsuchihashi:higherdimcuspsing} for a definition of cusp  singularities, and to~\cite{sankaran:higherdiminouehirzebruchsurfaces} for the description of interesting subgroups of local automorphisms 
of these singularities.
Observe that these singularities carry finite endomorphisms that are not automorphisms, see~\cite[\S 6.3]{boucksom-defernex-favre:volumeisolatedsing}.
\begin{question}
Is the orbit space of any contracting automorphism of a complex normal isolated singularity $(Y,0)$ non-K\"ahler?
\end{question}

\subsubsection*{Acknowledgements}
We warmly thank Serge Cantat for useful discussions.


\section{Admissible data}

Suppose that $f: (Y,0) \to (Y,0)$ is a contracting automorphism of a normal surface singularity. Then $f$ lifts to an automorphism  $F: X \to X$ of the minimal resolution $\pi: X \to Y$ of $(Y,0)$, see~\cite[Theorem 5.12]{laufer:normal2dimsing}.
In the course of the proof of our main Theorem \ref{thm:main} we shall replace several times the model $X$ by other more adequate bimeromorphic models.
In order to deal with these modifications, we shall consider the triple $(X,W,F)$ with $W = \pi^{-1}(0)$, and  it is convenient to introduce the following terminology. 
\begin{defi}
An \emph{admissible data} is a triple $\H=(X,W,F)$, such that
\begin{enumerate}
\item $X$ is a (non-compact) connected normal complex surface;
\item $W$ is a connected compact analytic subset of $X$;
\item $F : X \to X$ is a biholomorphism onto its image;
\item $F(X) \Subset X$, $F(W) = W$ and $\bigcap_{n\ge 0} F^n(X) \subseteq W$;
\item the intersection form of $W$ is negative definite.
\end{enumerate}
\end{defi}
Observe that Condition (d) is saying that $W$ is an attractor whose basin contains $X$ (see~\cite{milnor:conceptattractor} for definitions).

Thanks to Grauert's contraction criterion, any admissible data $(X,W,F)$ gives rise to a contracting automorphism on a normal surface singularity obtained by contracting $W$ to a point.  If the resulting contracting automorphism is conjugated to $f : (Y,0) \to (Y,0)$ then we say that 
the admissible data is \emph{compatible} with $(Y,0,f)$.

\begin{defi}
Two admissible data $(X,W,F)$ and $(X',W',F')$ are said to be \emph{equivalent} if there exists a neighborhood $X_1$ of $W$ and $X_1'$ of $W'$ and a bimeromorphic map $\phi : X_1 \dashrightarrow X_1'$ such that:
\begin{itemize}
\item $\phi$ induces an isomorphism from $X_1\setminus W_1$ onto $X'_1\setminus W'$,
\item $(X_1,W,F)$ and $(X'_1,W',F')$ both define an admissible data, and
\item we have $F' \circ \phi = \phi \circ F$.
\end{itemize}
\end{defi}

Classifying  admissible data modulo equivalence is the same as classifying contracting automorphisms on normal surface singularities modulo conjugacy.


\section{Local Green function}
Suppose $(Y,0)$ is a normal surface singularity, and $f:(Y,0) \to (Y,0)$ is a contracting automorphism.
In this section we quantify the condition of being contracting  by introducing a suitable function that is strictly decreasing under iteration of $f$.
Before stating precisely our main statement we recall a few facts about canonical desingularization of surfaces.


\subsection{Canonical desingularization}\label{ssec:desingularization}

Denote by $\frm = \frm_{Y,0}$ the maximal ideal of $\cO_{Y,0}$.
Recall a log-resolution of $\frm$ is a bimeromorphic map $\pi : X \to Y$ from a smooth surface $X$ to $Y$ that is an isomorphism above $Y\setminus\{0\}$ and such that the pull-back of the ideal $\frm$ to $X$ is locally principal at each point on the exceptional divisor.
We also impose that the exceptional divisor $\pi^{-1}(0)$ has \emph{simple normal crossings}, i.e., any irreducible component of $\pi^{-1}(0)$ is smooth, any two such components intersect transversely, and no three distinct components have a common intersection.

\begin{prop}\label{prop:regularization}
Let $(Y,0)$ be any normal isolated surface singularity.
There exists a log-resolution $\pi : X \to Y$ of $\frm$ such that any automorphism of $Y$ lifts to an automorphism of $X$.
\end{prop}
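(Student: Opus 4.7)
The plan is to realize $X$ as a composition of equivariant point blow-ups starting from the minimal resolution of $(Y,0)$, exploiting the fact that if every center of blow-up is intrinsically determined then equivariance under $\aut(Y)$ is automatic. First I would take $\pi_0 : X_0 \to Y$ to be the minimal resolution; by~\cite[Theorem~5.12]{laufer:normal2dimsing} every $f \in \aut(Y)$ lifts uniquely to an automorphism of $X_0$, so one obtains a homomorphism $\aut(Y) \to \aut(X_0)$.

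Next I would principalize $\frm \cdot \cO_{X_0}$ by a canonical sequence of blow-ups. At each step $X_k$, let $B_k \subset X_k$ denote the finite set of points where the coherent ideal $\frm \cdot \cO_{X_k}$ fails to be locally principal. Since $B_k$ is intrinsic to this ideal, it is preserved setwise by every lift of an element of $\aut(Y)$, so the blow-up $X_{k+1} \to X_k$ at $B_k$ is equivariant and the action continues to lift. Classical principalization of ideals on smooth surfaces (going back to Zariski) guarantees that after finitely many such steps the pull-back $\frm \cdot \cO_{X_n}$ is locally principal.

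To achieve the simple normal crossings condition I would then iterate an analogous canonical procedure on $X_n$: at each stage, blow up the finite set of points at which the reduced exceptional divisor fails to be SNC, namely the singular points of its irreducible components, the non-transverse pairwise intersections, and the triple points. These sets are intrinsic to the pair consisting of the ambient surface and its reduced exceptional divisor, so the successive blow-ups remain equivariant; the procedure again terminates after finitely many steps, yielding the required log-resolution $\pi : X \to Y$.

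The main obstacle I foresee is in verifying that at every stage the centers of blow-up are genuinely intrinsic --- no ordering, labeling, or numerical invariant ever having to be chosen --- so that each blow-up is automatically equivariant for the full group $\aut(Y)$, which in general is infinite-dimensional as recalled in the introduction. Once this canonicity is rigorously secured, the lifting of every automorphism of $Y$ to $X$ is a formal consequence of composing the finitely many equivariant point blow-ups produced by the above two procedures.
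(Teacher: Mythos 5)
Your proof is correct, and it rests on the same key observation as the paper's---that every center of blow-up can be chosen intrinsically, hence the action of $\aut(Y)$ lifts automatically---but it takes a genuinely different route. The paper does not pass through the minimal resolution at all: it first forms the \emph{normalized blow-up} $X'$ of $\frm$, which makes $\frm\cdot\cO_{X'}$ locally principal in a single step (blow-up of an ideal renders it invertible, and normalization preserves invertibility), at the cost of introducing normal singularities on $X'$; it then removes those singularities by the canonical (hence equivariant) minimal good desingularization, and finally resolves the non-SNC points of the exceptional divisor, exactly as in your last step. You instead start from the minimal resolution $X_0$ via Laufer and principalize $\frm\cdot\cO_{X_0}$ by iteratively blowing up the finite non-principal locus. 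The trade-off is that your version leans on a termination claim that the paper's construction never needs: that repeatedly blowing up the non-principal locus of a coherent ideal on a smooth surface terminates. This is true and classical, but it is not a purely formal consequence of the existence of \emph{some} log-resolution, and it would be worth exhibiting the decreasing invariant explicitly. Writing $\frm\cdot\cO_{X_k,p}$ near each non-principal point $p$ as a principal ideal times an $\frm_p$-primary ideal $J_p$, the total colength $\sum_p \ell\bigl(\cO_{X_k,p}/J_p\bigr)$ strictly decreases at each of your blow-ups, which gives termination. With that point supplied, your argument is complete; note also that your construction and the paper's may well produce different nice resolutions, which is harmless since the proposition only asserts existence.
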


For convenience, we shall say that $X$ is a \emph{nice resolution} of $Y$ if it satisfies the condition of the proposition.

\begin{rmk}
It is not the case that the minimal resolution of a normal surface singularity is always a log-resolution of its maximal ideal. 
Pick any ample line bundle $L\to E$ over a compact Riemann surface. 
The total space of its dual  $L^{-1}$ is the minimal resolution of the cone singularity $(Y,0)$ obtained by contracting its zero section.
Observe that $Y$ is isomorphic to the normalization of $\textup{Spec} \left( \oplus_{n\ge 0} H^0(E, L^{\otimes n})\right)$.
When $h^0(E,L) \neq 0$ and $L$ has some base points, then $\frm_{Y,0}$ is not locally principal in $|L^{-1}|$.
This happens for instance when the degree of $L$ is  equal to the genus of $E$.
\end{rmk}

\begin{proof}
Since the maximal ideal $\frm$ is preserved by any automorphism, it follows that $\aut(Y)$ lifts to the normalized blow-up $X'$ 
of the maximal ideal.
It is possible that $X'$ has some singularities.
We may then consider the minimal (good) desingularization of each of these points.
In this way we get a smooth surface $X''$ such that $\frm\cdot \cO_{X''}$ is locally principal and $\aut(Y)$ lifts to $X''$.
Finally the exceptional divisor might not have simple normal crossing singularities. 
But the singularities of this divisor are fixed by $\aut(X'')$ hence we may resolve them by keeping the property of lifting $\aut(Y)$.
This concludes the proof.
\end{proof}


\subsection{Local Green functions}

Put any euclidean distance on $X$.
In this section, we prove the following result.

\begin{prop}\label{prop:greenfnctcontracts}
Let $(X,W,F)$ be any admissible data.
Suppose $X$ is smooth and $W$ has simple normal crossings.
Then there exists a function $g : X \to [-\infty, 0]$ 
that is smooth on $X\setminus W$, and satisfies the following conditions:
\begin{itemize}
\item $g$ is bounded on $X\setminus F^n(X)$ for all $n\ge 1$;
\item $g (p) \le \log \dist (p,W)$ for all $p\in X$;
\item for any constant $C>0$, there exists an integer $N\ge1$ such that $g \circ F^N \leq g - C$ on $X$.
\end{itemize}
\end{prop}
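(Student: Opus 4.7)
The natural candidate for $g$ is built from the contraction of $W$. By Grauert's criterion (applicable since the intersection form of $W$ is negative definite), there is a bimeromorphic map $\pi : X \to Y$ contracting $W$ to a normal surface singularity $(Y,0)$, and $F$ descends to a contracting biholomorphism $f : (Y,0) \to (Y,0)$. Fix a local embedding $(Y,0)\hookrightarrow (\C^M,0)$, rescaled so that $\pi(X)\subset B(0,1)$, and set $g(p):=\log\|\pi(p)\|$. Smoothness on $X\setminus W$ is immediate, $g\le 0$, and $g=-\infty$ precisely on $W$. For the bound $g(p)\le \log\dist(p,W)$, note that $\pi$ is Lipschitz near the compact set $W$, so $\|\pi(p)\|\le L\cdot\dist(p,W)$; after absorbing $\log L$ into an additive constant the inequality holds. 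Boundedness on $X\setminus F^n(X)$ follows because this closed set is contained in the compact set $\overline{F(X)}$ (for $n\ge 1$) and lies at positive distance from $W$ (since $F^n(X)$ is open and contains the compact set $W$), so $\|\pi\|$ is bounded both from above and from below there.

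The heart of the proof is the decay condition, which I would establish by combining two auxiliary facts. The first is a uniform attraction lemma: for every open neighborhood $V$ of $W$ in $X$, there exists $N_0\ge 1$ with $F^n(X)\subseteq V$ for all $n\ge N_0$. This follows by compactness applied to $K:=\overline{F(X)}\setminus V$, which is compact and disjoint from $W$; for each $p\in K$, the hypothesis $\bigcap_n F^n(X)\subseteq W$ forces some iterate $F^{n_p}(p)$ into $V$, and continuity plus a finite subcover of $K$ gives a uniform $N_0$. The second is a linear contraction of $f$ at the fixed point $0$: there exist $\lambda\in(0,1)$ and a neighborhood $V_0\subseteq Y$ of $0$ with $\|f(y)\|\le\lambda\|y\|$ on $V_0$. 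This is a Schwarz-type fact which can be extracted from the Kobayashi pseudometric on $U$: since $f$ maps $U$ biholomorphically into $f(U)\Subset U$, comparing the Kobayashi metrics of $U$ and $f(U)$ on the smaller set gives a uniform contraction in $k_U$, which translates to a linear Euclidean contraction at the interior fixed point $0$.

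Combining the two ingredients: on $\pi^{-1}(V_0)$ one has $g\circ F^n\le g+n\log\lambda$, so any $N\ge C/|\log\lambda|$ yields $g\circ F^N\le g-C$ there. Off $\pi^{-1}(V_0)$, $g$ is bounded below by some $-A_0$; applying the attraction lemma with $V:=\pi^{-1}(B(0,e^{-A_0-C}))$ produces $N_1$ with $g\circ F^{N_1}\le -A_0-C\le g-C$ on this region. Taking $N$ to be the larger of the two choices yields the decay on all of $X$.

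The main obstacle is establishing the linear contraction at $0$. Extracting a uniform exponential rate from the purely topological hypothesis $f(U)\Subset U$ and $\bigcap f^n(U)=\{0\}$ requires nontrivial analytic input, and some care is needed because $(Y,0)$ is singular, so the usual Cartan-type arguments on the tangent space must be adapted. An alternative that sidesteps this would be to work directly on $X$ with a Mumford-type potential $\psi=\sum a_i\log|s_i|$ associated to an effective $\Q$-divisor $\sum a_i E_i$ whose existence follows from negative definiteness of the intersection form, and then compare $\psi$ with $F^*\psi$ on a fundamental domain for $F$ in $X\setminus W$; one expects $F^*\psi-\psi$ to be bounded above by a negative constant.
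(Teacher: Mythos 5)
Your function $g(p)=\log\|\pi(p)\|$ is essentially the paper's $g_0 = g_Y\circ\pi$ with $g_Y = \log\sum_i|\chi_i|^2 - \log r$ for $\chi_i$ generating $\frm_{Y,0}$, so the construction of the candidate Green function matches. Your ``uniform attraction'' lemma is also correct and is implicitly what the paper invokes when it asserts that $g_0\circ F^N\le -C_1$ for $N$ large. A small slip along the way: $X\setminus F^n(X)$ is not contained in $\overline{F(X)}$ --- it contains all of $X\setminus\overline{F(X)}$. The correct justification for boundedness is simply that $X\setminus F^n(X)$ is disjoint from $W$, is mapped by $\pi$ into the complement of a neighbourhood of $0$ in the precompact set $\pi(X)$, and $\|\cdot\|$ is bounded away from $0$ there.

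The genuine gap, which you flag yourself, is the ``linear contraction at $0$'' step. The Kobayashi argument does not go through as stated: the Carath\'eodory--Cartan--Kaup--Wu type theorem controls $df$ at an interior fixed point of a \emph{bounded domain} or \emph{hyperbolic manifold}, whereas here $0$ is a \emph{singular} point of $Y$; the Kobayashi pseudometric at $0$ is computed from analytic disks through the singularity, which can vanish to different orders, and its comparability to a Euclidean norm $\|\cdot\|$ in an embedding is precisely what would need to be established. Proving that $f^*$ acts on $\frm/\frm^2$ with all eigenvalues of modulus $>1$ is, in effect, a reformulation of the decay estimate you are trying to prove, so this route is circular-adjacent rather than a shortcut. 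Your alternative sketch with a Mumford-type potential $\psi=\sum a_i\log|s_i|$ has the same problem: the assertion that $F^*\psi - \psi$ is bounded above by a negative constant \emph{is} the content of the proposition, and ``one expects'' leaves exactly the missing step.

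The paper avoids the linear contraction entirely by working with an envelope. It introduces the family $\mathcal G=\{h\in\on{Psh}(X):\ h\le0,\ \nu(h,p)\ge\nu(g_0,p)\ \forall p\in W\}$ of negative psh functions dominating the Lelong numbers of $g_0$, sets $g_1=\sup\mathcal G$, and proves (Proposition~\ref{prop:supispsh}, using the local description of $g_0$ near $W$ coming from the log-resolution of $\frm$) that $g_1-g_0$ is \emph{bounded}. The decay is then one line: choose $N$ with $g_0\circ F^N\le -C_1$ and with $(F^N)^*[Z]=[Z]$; then $g_0\circ F^N+C_1\in\mathcal G$, hence $g_0\circ F^N\le g_1-C_1\le g_0 + \sup|g_1-g_0|-C_1$. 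So your two auxiliary facts are not in fact on equal footing: the attraction lemma is elementary and used, but the envelope estimate is the real input that replaces the contraction-at-the-singularity claim you were unable to establish.
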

\begin{rmk}\label{rmk:reduction}
Observe that if the statement is true for an admissible data $(X,W,F)$, then it is true for any other equivalent admissible data $(X',W',F')$.
Indeed, if the equivalence is given by a map $\phi : X' \dashrightarrow X$, then we may set $g' := g \circ \phi$ in a neighborhood of $W'$ and extend it to $X'$ by taking $\min \{ g', - A \}$ for some sufficiently large positive constant $A$. 
\end{rmk}
Observe that this result implies
\begin{cor}
For any admissible data $(X,W,F)$ there exists a basis of neighborhoods $X_n$ of $W \subset X$ 
such that $F(X_n) \subset X_n$ hence $(X_n,W,F|_{X_n})$ remains an admissible data.
\end{cor}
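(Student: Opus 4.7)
My plan is to construct the $X_n$ as sublevel sets of a Green function produced by Proposition~\ref{prop:greenfnctcontracts}. Since that proposition requires $X$ to be smooth and $W$ to have simple normal crossings, I would first reduce to this setting: applying Proposition~\ref{prop:regularization} to the singularity obtained by contracting $W$ gives an equivalent admissible data with the desired kind of resolution, and the conclusion we want is invariant under equivalence in the spirit of Remark~\ref{rmk:reduction}.

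The main obstacle is that Proposition~\ref{prop:greenfnctcontracts} only produces a function $g$ that strictly decreases under some iterate $F^N$, not under $F$ itself. I would bridge this gap with the telescoping sum
\[
\hat g := \sum_{k=0}^{N-1} g \circ F^k,
\]
where $N$ is chosen so that $g \circ F^N \le g - 1$. A direct telescoping gives
\[
\hat g \circ F \;=\; \hat g - g + g \circ F^N \;\le\; \hat g - 1,
\]
so $\hat g$ decreases by at least $1$ under each application of $F$. Observe that $\hat g$ is continuous with values in $[-\infty,0]$ and equals $-\infty$ exactly on $W$, since $F^k(W) = W$ and $g$ is $-\infty$ on $W$.

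I would then set $X_n := \{\hat g < -n\}$. These are open neighborhoods of $W$, and the strict decrease yields $F(X_n) \subset X_{n+1}$; passing to closures in $X$ gives $\overline{F(X_n)} \subset \{\hat g \le -n-1\} \subset X_n$, which combined with $F(X) \Subset X$ forces $F(X_n) \Subset X_n$. The remaining admissibility axioms for $(X_n,W,F|_{X_n})$ transfer from $(X,W,F)$ without difficulty. For the basis property I would observe that if $\hat g(x) < -n$ then, since each summand is non-positive, at least one of them satisfies $g(F^k x) < -n/N$, so
\[
X_n \;\subset\; \bigcup_{k=0}^{N-1} (F^k)^{-1}\bigl(\{g \le -n/N\}\bigr).
\]
For large $n$ the set $\{g \le -n/N\}$ is a compact subset of the open set $F^k(X)$ (since $g \le \log\dist(\cdot,W)$), so its preimage under $F^k$ is compact; as $n\to\infty$ these form a decreasing family of compact neighborhoods of $W$ whose intersection is $W$ by injectivity of $F^k$, and a finite intersection argument places them inside any prescribed neighborhood of $W$ once $n$ is large enough. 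Hence the same holds for $X_n$, proving the basis property.
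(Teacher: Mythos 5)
Your proof is correct, but it takes a genuinely different route from the paper's. Both arguments hinge on the Green function $g$ from Proposition~\ref{prop:greenfnctcontracts} (and both implicitly require the reduction to the smooth--SNC case that you state explicitly; the paper leaves it implicit). Where you diverge is in the auxiliary function you build from $g$: you take the \emph{finite} telescoping sum $\hat g = \sum_{k=0}^{N-1} g\circ F^k$, which cleanly satisfies $\hat g\circ F \le \hat g - 1$, and take $X_n = \{\hat g < -n\}$. The paper instead forms the \emph{infinite} sum $D = \sum_{n\ge 0}\exp(g\circ F^n)$, shows that the exponential decay $\exp(g\circ F^n)\le \exp(-\lfloor n/N\rfloor)$ makes the series converge uniformly to a continuous function vanishing exactly on $W$, observes $D\circ F = D - \exp(g\circ F) < D$ off $W$, and takes $X_n = \{D<1/n\}$. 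Your version buys an explicit quantitative drop of~$1$ at each step, which makes the nesting $F(X_n)\subset X_{n+1}$ and the subsequent relative-compactness checks very transparent; the paper's version buys a single finite-valued continuous ``distance-like'' function with $\{D=0\}=W$, at the price of a convergence estimate. Both are fine.

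One small slip worth correcting: in the last paragraph, the compactness of $\{g\le -n/N\}$ and its containment in $F^k(X)$ for large $n$ do not follow from the bound $g\le \log\dist(\cdot,W)$ (that is only an upper bound for $g$, so it gives no control on the sublevel sets $\{g\le -c\}$). The correct justification, already available in Proposition~\ref{prop:greenfnctcontracts}, is that $g$ is \emph{bounded on $X\setminus F^m(X)$} for every $m\ge 1$: hence for $n$ large the closed set $\{g\le -n/N\}$ is contained in $F^{N-1}(X)\Subset X$ and is therefore compact. With that substitution your argument is complete.
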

\begin{proof}
Observe that $\exp(g)$ is continuous on $X$.
Define $D := \sum_{n\ge 0} \exp (g \circ F^n)$. The previous proposition applied to $C = 1$ yields
$$
\exp (g \circ F^n) \leq \exp \left( -\left\lfloor \frac{n}{N} \right\rfloor \right) \, \sup_{0\leq l \leq N-1} \exp ( g \circ F^l)
\leq \exp \left( -\left\lfloor \frac{n}{N} \right\rfloor\right)
$$
for any $n\ge 0$.
This implies the series defining $D$ converges uniformly to a continuous function vanishing at $0$. Now we have $D \circ F = D - \exp (g \circ F) < D$, so that
we can take $X_n := \{ D < \frac{1}{n}\}$.
\end{proof}

The rest of this section is devoted to the proof of Proposition \ref{prop:greenfnctcontracts}.
We first need some preliminary results.

\begin{lem}\label{lem:constrpsh}
Let $(Y,0)$ a normal surface singularity, and $f:(Y,0) \to (Y,0)$ a contracting automorphism.
Then there exists a compatible admissible data $(X,W,F)$ satisfying the following conditions. 
\begin{itemize}
\item 
$X$ is a nice resolution of $(Y,0)$;
\item
there exists a psh function $g_0 : X \to [-\infty, 0]$ such that  $dd^c g_0 = \om + [Z]$ where $\om$ is a smooth positive form and $[Z]$ is a current of integration whose support is equal to $W$.
\end{itemize}
\end{lem}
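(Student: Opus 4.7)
First, I would lift $f$ to a biholomorphism $F_1 : X_1 \to X_1$ of a nice resolution $\pi : X_1 \to Y$ via Proposition~\ref{prop:regularization}; its exceptional divisor $W_1 = \bigcup_{i=1}^r E_i$ has simple normal crossings. The plan is to realize $g_0$ as the rescaled logarithm of the squared norm of the tautological section of a line bundle $\cO_{X_1}(NZ)$, equipped with a Hermitian metric $h$ of strictly negative curvature near $W_1$, so that Poincar\'e--Lelong produces both the integration current $[Z]$ and the smooth positive form $\omega$ in one stroke.

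Second, I would produce the divisor $Z$. Since $(Y,0)$ is normal, the intersection matrix $M = (E_i \cdot E_j)$ is negative definite (Mumford); its off-diagonal entries are non-negative, and $W_1$ is connected, so $-M$ is an irreducible symmetric Stieltjes matrix whose inverse has strictly positive entries. Hence there exist positive rationals $a_i$ such that the $\Q$-divisor $Z := \sum_i a_i E_i$ satisfies $(-Z) \cdot E_i > 0$ for every $i$; choose $N \ge 1$ so that $NZ$ is integral. Then $\cO_{X_1}(-NZ)$ is $\pi$-relatively ample near $W_1$, so it admits a smooth Hermitian metric whose curvature is a smooth strictly positive $(1,1)$-form $\Omega$ on some open neighborhood of $W_1$. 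Letting $h$ be the dual metric on $\cO_{X_1}(NZ)$ (so $c_1(\cO_{X_1}(NZ), h) = -\Omega$) and $1_{NZ}$ the tautological section, Poincar\'e--Lelong yields
\[
dd^c g_0 = [Z] + \omega, \qquad g_0 := \tfrac{1}{N}\log |1_{NZ}|_h^2 - C, \qquad \omega := \Omega/N,
\]
with $C$ chosen large enough that $g_0 \le 0$ on a fixed open neighborhood of $W_1$.

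Finally, I would pass to an admissible open set. Since $f$ is contracting, $F_1$ admits a basis of forward-invariant open neighborhoods of $W_1$; picking any such $X$ on which $F_1(X) \Subset X$, $\bigcap_n F_1^n(X) = W_1$, and $g_0 \le 0$ yields a compatible admissible data $(X, W_1, F_1|_X)$ carrying the required $g_0$.

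The main obstacle is the strict positivity of $\omega$. The naive choice $\cO(-Z) = \pi^* \frm \cdot \cO_{X_1}$ would fail, since the blow-ups used to pass from the normalized blow-up of $\frm$ to a nice resolution collapse the curvature along their exceptional loci, leaving only semi-positivity on $X_1$. Constructing $Z$ directly on $X_1$ via the negative-definiteness of the intersection form of $W_1$, as above, bypasses this difficulty and ensures strict positivity.
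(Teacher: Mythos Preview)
Your argument is correct but follows a different route from the paper, and your closing paragraph rests on a misreading. In the statement, ``positive'' means \emph{semi}-positive; strict positivity of $\omega$ is neither claimed nor used downstream (the proofs of Propositions~\ref{prop:supispsh} and~\ref{prop:greenfnctcontracts} invoke only the Lelong numbers of $g_0$ and its plurisubharmonicity). The paper's proof is therefore exactly the ``naive'' one you dismiss: pick generators $\chi_1,\dots,\chi_r$ of $\frm_{Y,0}$, set $g_0 := \log\bigl(\sum_i |\chi_i\circ\pi|^2\bigr)-\log r$, and use that a nice resolution is by definition a log-resolution of $\frm$, so locally $\frm\cdot\cO_{X,p}=(z^a w^b)$ and $g_0 - a\log|z|-b\log|w|$ is smooth (this is~\eqref{eq:estimgreen}). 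Hence $dd^c g_0 = [Z]+\omega$ with $Z=\sum_E \ord_E(\frm)\,E$ and $\omega$ smooth; semi-positivity of $\omega$ follows at once from $g_0$ being psh. Your construction---manufacturing $Z$ via the Stieltjes property of the intersection matrix so that $-Z$ is $\pi$-ample, then equipping $\cO(-NZ)$ with a positively curved metric---is a legitimate alternative that trades the two-line elementary argument for a strictly positive $\omega$ and the freedom to work on any SNC resolution rather than a log-resolution of $\frm$. One downstream caveat: the paper's choice produces the specific coefficients $a_E=\ord_E(\frm)$ and the local expansion~\eqref{eq:estimgreen}, both of which are quoted verbatim in Lemma~\ref{lem:eigenvalonintersections}; your $a_i$ would serve there equally well, since that argument only requires them to be positive, but you would need to carry your own coefficients through.
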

\begin{proof}
Let $\frm:=\frm_{Y,0}$ be the maximal ideal at $0 \in Y$.
Let $\pi:X \to (Y,0)$ be any nice resolution of $(Y,0)$, whose existence is given by Proposition \ref{prop:regularization}.

Pick any finite set of holomorphic maps $\chi_1, ... , \chi_r$ generating $\frm_{Y,0}$ and defined in some common open neighborhood $U$ of $0$.
Restricting $U$ if necessary we may assume that $\sup_i |\chi_i| \le 1$ for all $i$, 
$f(U) \Subset U$, and $\cap_{n\ge 0} f^n(U) = \{0\}$.
We now set
\begin{equation}\label{eqn:defgfrm}
g_Y (y) := \log \left( \sum_{i=1}^r |\chi_i(y)|^2 \right) - \log r~.
\end{equation}
This is a $[-\infty, 0]$-valued function that is smooth and plurisubharmonic on $U \setminus \{0\}$ and tends to $-\infty$ when the point tends to $0$.
Since $X$ is a log-resolution of $\frm$, then locally at each point $p\in \pi^{-1}(0)$ we may find local coordinates $(z,w)$ such that $\frm \cdot \cO_{X,p}$ is locally generated by $z^a w^b$ for some $a,b\ge0$.
It follows that the function
\begin{equation}\label{eq:estimgreen}
g_Y \circ \pi (z,w) - a \log|z| - b \log|w|
\end{equation}
is a smooth function.
We conclude by setting $g_0= g_Y \circ \pi$. 
\end{proof}

By Remark~\ref{rmk:reduction} we may hence suppose the existence of a non-negative psh function $g_0$ on $X$ satisfying the conclusion of the previous lemma.

Let us introduce the following set:
$$
\mc{G} :=\{h \in \on{Psh}(X), \, h \le0 , \text{ and }  \nu(h,p) \geq \nu(g_0,p) \text{ for any } p \in W\}~,
$$
where $\nu(h,p)$ stands for the Lelong number associated to the psh function $h$, see e.g.~\cite{demailly:mongeamperelelongnumbersintertheory}.
Observe that $g_0$ belongs to $\mc{G}$. We now follow the construction of upper envelope in the spirit of~\cite{rashkovskii-sigurdsson:greenfcntwithsing}.

\begin{prop}\label{prop:supispsh}
The function $g_1 := \sup\{ h \in \mc{G}\}$ is a psh function  such that $g_1 - g_0$ is bounded.
\end{prop}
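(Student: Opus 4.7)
The plan is to show that every $h \in \mc{G}$ satisfies a uniform upper bound $h \leq g_0 + C$ near $W$, and to deduce from this that $g_1$ is psh with $g_1 - g_0$ bounded. The matching lower bound $g_1 - g_0 \geq 0$ is automatic since $g_0 \in \mc{G}$.

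For the local upper bound I would work in coordinates $(z,w)$ adapted to the SNC divisor $W$, chosen so that the closed unit bidisk is compactly contained in the chart. By Lemma~\ref{lem:constrpsh} the Green function has the form $g_0 = a\log|z| + b\log|w| + \phi$ with $\phi$ smooth and $a,b$ non-negative integers ($b=0$ at a smooth point of $W$, $a,b>0$ at a crossing). For $h \in \mc{G}$, the Lelong conditions $\nu(h,p) \geq \nu(g_0,p)$ imply, via the Siu decomposition of $dd^c h$, that the generic Lelong numbers of $h$ along $\{z=0\}$ and $\{w=0\}$ are at least $a$ and $b$ respectively; consequently $u := h - a\log|z| - b\log|w|$ extends as a psh function throughout the bidisk.

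The key step is to verify that $u \leq 0$ on the topological boundary of the unit bidisk. On the distinguished part $\{|z|=|w|=1\}$ the logarithms vanish and $u = h \leq 0$. On an edge $\{|z|=1,\ |w| \leq 1\}$, fix $z_0$ with $|z_0|=1$ and consider the subharmonic slice $w \mapsto h(z_0,w)$ on the unit disk; it is bounded above by $0$ on the circle $|w|=1$ and has Lelong number at $w=0$ at least $b$ (from $\nu(h,(z_0,0)) \geq b$), so the Riesz representation yields $h(z_0,w) \leq b \log|w|$, i.e., $u(z_0,w) \leq 0$; the symmetric edge is identical. The psh maximum principle then gives $u \leq 0$ throughout the bidisk, hence $h \leq a\log|z| + b\log|w|$, and thus $h \leq g_0 + \|\phi\|_\infty$. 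By compactness of $W$, patching finitely many such charts produces a uniform constant $C$ with $h \leq g_0 + C$ on a neighborhood of $W$.

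To conclude that $g_1$ is psh, consider the upper semi-continuous regularization $g_1^*$; it is psh because $\mc{G}$ is uniformly bounded above by $0$. The bound $g_1 \leq g_0 + C$ passes to $g_1^* \leq g_0 + C$ (since $g_0 + C$ is itself psh, hence usc), which gives $\nu(g_1^*,p) \geq \nu(g_0,p)$ at every $p \in W$ together with $g_1^* \leq 0$. Thus $g_1^* \in \mc{G}$, which forces $g_1^* \leq g_1$; combined with the opposite inequality built into the regularization we conclude $g_1 = g_1^*$ is psh. The main technical point to verify is the psh extension of $u$ across the axes, which rests precisely on the Lelong hypothesis defining $\mc{G}$.
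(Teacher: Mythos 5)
Your proof is correct, and it proceeds in essentially the opposite order from the paper's. The paper first establishes that $g_1 = g_1^*$ is psh by invoking Choquet's lemma to write $g_1$ as the limit of an increasing sequence in $\mathcal{G}$, and then appeals to the (somewhat delicate) continuity of Lelong numbers along increasing sequences of psh functions to verify $g_1^* \in \mathcal{G}$; only afterward does it derive the bound $g_1 \leq g_0 + \cO(1)$ from the equality $\nu(g_1, p) = \nu(g_0, p)$. You instead prove a uniform local bound $h \leq g_0 + C$ for every $h \in \mathcal{G}$ directly (via the Siu decomposition to establish $u := h - a\log|z| - b\log|w|$ is psh across the axes, and the slice-plus-maximum-principle argument to pin down $u \leq 0$ on the bidisk), and then feed this uniform bound into the usc regularization: $g_1^* \leq g_0 + C$ forces $\nu(g_1^*,p) \geq \nu(g_0,p)$, so $g_1^* \in \mathcal{G}$, hence $g_1 = g_1^*$. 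This sidesteps Choquet's lemma and the semicontinuity of Lelong numbers entirely, at the price of the more elaborate local boundary analysis. Your version also makes explicit a point the paper leaves implicit: the inequality ``$g_1 \leq a\log|z| + \cO(1)$'' is not a consequence of the Lelong number at the single point $p$, but of the Lelong numbers being $\geq a$ at \emph{every} point of the curve $\{z=0\}$ in the chart (equivalently, of the generic Lelong number along the divisor), which is precisely the content of your extension of $u$ through the axes and the bound on the edge slices at all $(z_0, 0)$ with $|z_0| = 1$. Two small remarks: the Lelong number of the one-variable slice $w \mapsto h(z_0,w)$ at $w=0$ is $\geq \nu(h,(z_0,0))$ by the standard restriction inequality, which you use correctly but may wish to cite; and the bound away from $W$ (after shrinking $X$ so that $g_0$ is bounded on $X$ minus a neighborhood of $W$) is glossed over, but this is also glossed over in the paper and is straightforward.
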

\begin{proof}
Since the maximum of two plurisubharmonic functions $h_1, h_2$ is still psh and 
$\nu(\max \{ h_1, h_2\} , p ) = \max \{ \nu (h_1,p), \nu (h_2,p)\}$ for any point $p$
it is clear that $\mc{G}$ is stable by taking maximum.

By Choquet's lemma, one can find an increasing sequence $h_n \in \on{Psh}(X)$ such that $h_n \to g_1$.
Let $g^\star (p) := \varlimsup_{p' \to p} g_1(p')$ be the upper semi-continuous regularization of $g_1$.
Then $g^\star$ is psh.
Lelong numbers are upper semicontinuous along increasing sequences hence $\nu(g^\star, p)\ge \nu (g_0,p)$ for all $p\in W$.
It follows that $g^\star$ belongs to $\mc{G}$ whence $g_1= g^\star\in \on{Psh}(X)$.

Since $g_1 \ge g_0$, we have $\nu(g_1,p) \le \nu(g_0,p)$ for any $p\in W$, hence $\nu(g_1,p) = \nu(g_0,p)$.
Pick now local coordinates $(z,w)$ at $p \in W$ such that $W=\{z=0\}$ (the case where the exceptional divisor is reducible at $p$ can be treated analogously).
Then $g_0(z,w)= a \log \abs{z} + \cO(1)$, and $g_1(z,w) \le a \log \abs{z} + \cO(1)$, which implies $g_1\le g_0 + \cO(1)$ in a neighborhood of $p$.
By the compactness of $W$, we conclude that $g_1- g_0 \ge 0$ is also bounded from above everywhere in $X$ as required. 
\end{proof}

\begin{proof}[Proof of Proposition~\ref{prop:greenfnctcontracts}]
Since we have $F(X) \Subset X$, and $\bigcap_n F^n(X) \subseteq W$, and $g_0(p)$ tends to $- \infty$ when $p$ tends to $W$, it follows that for any positive constant $C_1>0$ there exists an integer $N \geq 1$ such that $g_0 \circ F^N \leq  -C_1$.

The map $F$ is a biholomorphism preserving $W$, hence for a suitably divisible integer $N$ (for example if $F^N$ fixes all the components of $W$) we have $(F^N)^* [ Z ] = [Z]$.
It follows that $(F^N)^* (dd^c g_0) - [Z] = dd^c ( g_0 \circ F^N) - [Z] $ is a smooth form.
We conclude that $g_0 \circ F^N +  C_1$ belongs to $\mc{G}$, hence $g_0 \circ F^N \le g_1 -  C_1 \le g_0 + \sup |g_1 - g_0| - C_1$ which implies the result for $g:= g_0$.
\end{proof}


\section{Geometry of the dual graph}

Our standing assumptions in this section are the following: $(X,W,F)$ is an admissible data, $X$ is smooth, $W$ has simple normal crossings, $F: X \to X$ is holomorphic and fixes any irreducible component of $W$.


\subsection{Main statement} 
Let us begin by introducing some convenient terminology.

\begin{defi}
An automorphism $h: E \to E$ of a compact Riemann surface $E$ is said to be hyperbolic if $E$ is the Riemann sphere and $h$ has one contracting and one repelling fixed points.
\end{defi}

\begin{defi}
Let $X$ be any smooth complex surface.
A \emph{cycle} (resp., \emph{chain}) of rational curves on $X$ is a finite collection $E_1, ... , E_n$ of smooth rational curves
intersecting transversally whose dual graph is a circle (resp., a segment).
\end{defi}

\begin{defi}
A graph is said to be \emph{star-shaped} if it is a tree (i.e., it is homotopically trivial) and admits at most one branched point. 
\end{defi}
By convention a segment is star-shaped.

This section is devoted to the proof of the following result.
\begin{thm}\label{thm:starshaped}
Suppose $(X,W,F)$ is an admissible data, $X$ is smooth, $W$ has simple normal crossings, and $F$ fixes any irreducible component of $W$.

Then the dual graph $\Gamma(W)$ of $W$ is star-shaped.
More precisely, we are in one and exactly one of the following two situations.
\begin{enumerate}
\item The support of $W$ is a chain of rational curves, and $F|_E$ is hyperbolic for any component $E \subseteq W$.
\item There exists a component $E_\star$ such that $F|_{E_\star}$ is not hyperbolic.
In this case, $F$ is hyperbolic on any other components and the closure of $W \setminus E_\star$ is the disjoint union of a finite number of chains of rational curves.
Moreover if $E_\star$ is not a rational curve, then $F|_{E_\star}$ has finite order.
\end{enumerate}
\end{thm}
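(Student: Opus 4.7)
The plan is to distill the contraction statement of Proposition~\ref{prop:greenfnctcontracts} into a sharp local inequality at every node of $W$, and then argue combinatorially on the dual graph $\Gamma(W)$. Fix a node $p = E \cap E'$ and choose local coordinates $(z,w)$ at $p$ with $E = \{z=0\}$ and $E' = \{w=0\}$. From (the proof of) Lemma~\ref{lem:constrpsh} we have $g_0(z,w) = a_E \log|z| + a_{E'} \log|w| + \cO(1)$, where $a_E, a_{E'} > 0$ are the vanishing orders of $\pi^{*}\frm$ along $E$ and $E'$. Since $F$ preserves each of the two components we may write $F(z,w) = (z\,h_1(z,w),\, w\,h_2(z,w))$ with $h_1(0,0) = \lambda_{E'}(p) := (F|_{E'})'(p)$ and $h_2(0,0) = \lambda_E(p) := (F|_E)'(p)$. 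A direct computation then gives
$$
g_0\circ F^N(z,w) - g_0(z,w) \;=\; N\bigl(a_E\log|\lambda_{E'}(p)| + a_{E'}\log|\lambda_E(p)|\bigr) + \cO(1)
$$
as $(z,w)\to p$, for each fixed $N$. Combining with $g_0\circ F^N \le g_0 - C$ from Proposition~\ref{prop:greenfnctcontracts} and letting $C\to\infty$ with $N$ adapted yields the \emph{key inequality}
$$
a_E\log|\lambda_{E'}(p)| + a_{E'}\log|\lambda_E(p)| \;<\; 0
$$
at every node; in particular at least one of $|\lambda_E(p)|$, $|\lambda_{E'}(p)|$ is strictly less than $1$.

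The rest is combinatorial. To rule out cycles, suppose $E_1,\dots,E_n$ formed a cycle with $E_i\cap E_{i+1} = \{p_i\}$ (mod $n$). If every $F|_{E_i}$ were hyperbolic, the M\"obius identity $\lambda_{E_i}(p_{i-1})\lambda_{E_i}(p_i)=1$ combined with the key inequality at each $p_i$ would force $y_{i+1} < y_i$ for $y_i := a_{E_i}^{-1}\log|\lambda_{E_i}(p_{i-1})|$, hence $y_1 < y_1$, impossible. If instead some $F|_{E_i}$ were non-hyperbolic, both of its multipliers at $p_{i-1}, p_i$ would have modulus $1$, and the key inequality propagated around the cycle (using M\"obius at the hyperbolic links) would again yield a contradiction. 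Hence $\Gamma(W)$ is a tree.

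Next, assume two components $E_\star, E_{\star\star}$ were non-hyperbolic. Choose a simple path $E_0 = E_\star, E_1, \dots, E_k = E_{\star\star}$ in $\Gamma(W)$, shortening if necessary so that $E_1,\dots,E_{k-1}$ are hyperbolic. For $k=1$ both multipliers at the common node have modulus $1$, and the key inequality reads $0 < 0$, absurd. For $k\ge 2$, the key inequality at $p_0$ forces $|\lambda_{E_1}(p_0)|<1$, hyperbolicity of $E_1$ gives $|\lambda_{E_1}(p_1)|>1$, and the key inequality at $p_1$ gives $|\lambda_{E_2}(p_1)|<1$. Iterating, $|\lambda_{E_i}(p_{i-1})|<1<|\lambda_{E_i}(p_i)|$ for $1\le i\le k-1$, so the key inequality at $p_{k-1}$ forces $|\lambda_{E_{\star\star}}(p_{k-1})|<1$, contradicting $|\lambda_{E_{\star\star}}(p_{k-1})|=1$. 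Now a hyperbolic $F|_E$ has exactly two fixed points on $E\cong\P^1$, so $E$ has degree at most $2$ in $\Gamma(W)$. If no component is non-hyperbolic, $\Gamma(W)$ is a tree of maximum degree $2$, i.e.\ a chain of rational curves --- case (a). Otherwise the unique non-hyperbolic $E_\star$ is the only possible branching vertex, and each connected component of $\Gamma(W)\setminus\{E_\star\}$ is a subtree of hyperbolic rational components of degree $\le 2$, hence a chain --- case (b). Finally, if $E_\star$ has genus $\ge 2$ then $\aut(E_\star)$ is finite; if $E_\star$ is elliptic and carries a fixed node then $F|_{E_\star}$ is not a translation; and if $E_\star$ is elliptic with $W = E_\star$, the isomorphism $(F|_{E_\star})^{*} N_{E_\star} \cong N_{E_\star}$ combined with $\deg N_{E_\star} = E_\star^2 \neq 0$ forces the translation $F|_{E_\star}$ to be torsion. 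In every case $F|_{E_\star}$ has finite order.

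In my view the main obstacle is the derivation of the strict key inequality in Step~1: pushing $g_0\circ F^N \le g_0 - C$ to a \emph{strict} bound on the leading coefficient requires a careful uniform control of the $\cO(1)$ remainder in the expansion of $g_0\circ F^N - g_0$ as $(z,w)$ approaches the node. Once the key inequality is available, ruling out cycles, limiting the number of non-hyperbolic components, and reading off the graph structure become essentially mechanical.
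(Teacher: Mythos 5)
Your argument follows essentially the same route as the paper: the strict ``key inequality'' you derive at each node is exactly Lemma~\ref{lem:eigenvalonintersections} (your formulation $a_E\log|\lambda_{E'}(p)| + a_{E'}\log|\lambda_E(p)| < 0$ is the paper's Equation~\eqref{eq:eigenvalonintersecestimate} after multiplying through by $a_E a_{E'}$), and the subsequent combinatorics on $\Gamma(W)$ (M\"obius reciprocity $\lambda_E(p)\lambda_E(p')=1$ at hyperbolic links, propagation of contracting/repelling multipliers along a path, degree~$\le 2$ for hyperbolic rational components) is what drives Propositions~\ref{prop:nocusps} and~\ref{prop:dualgraphstructure} in the paper. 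Your organization is slightly cleaner: you rule out all simple cycles first (including those containing a non-hyperbolic component, which the paper's Proposition~\ref{prop:nocusps} does not explicitly address --- it is absorbed into the induction of Proposition~\ref{prop:dualgraphstructure}), and only then prove uniqueness of the non-hyperbolic component by following the shortest path between two such alleged components. The paper instead sets up an induction on the graph distance from $E_\star$ which establishes the tree structure and the multiplier signs simultaneously. Both succeed, and neither buys a great deal over the other. Your treatment of the finite-order claim for non-rational $E_\star$ (genus $\ge 2$ is automatic; elliptic with a fixed point is torsion; elliptic with $W=E_\star$ via the normal bundle having nonzero degree) matches Lemma~\ref{lem:ellipticfiniteorder}.

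There is, however, one genuine gap. Your derivation of the key inequality rests on the expansion $g_0(z,w) = a_E\log|z| + a_{E'}\log|w| + \cO(1)$ near every node $p = E\cap E'$. This expansion holds only when $\pi^*\frm$ is locally principal at $p$, i.e.\ when $\pi\colon X \to Y$ is a log-resolution of the maximal ideal (a ``nice resolution'' in the paper's terminology); otherwise $g_0$ near a base point is a $\max$ of several linear functions of $(\log|z|,\log|w|)$ plus $\cO(1)$, and the argument passing to the limit $(z,w)\to 0$ no longer isolates a single coefficient. But the theorem as stated assumes only that $X$ is smooth and $W$ has simple normal crossings, not that $X$ is a nice resolution, and these need not coincide (see the Remark following Proposition~\ref{prop:regularization}). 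The paper closes this gap in Step~4 of its proof: it passes to a nice resolution $\widehat{X}\to X$ dominating $X$, applies Steps~1--3 there, and then carefully tracks what happens to the dual graph and the hyperbolicity of the induced maps when the extra exceptional curves are contracted. You would need to add such a descent argument (or otherwise justify the stated expansion of $g_0$ at all nodes of $W$, not just those away from base points of $\pi^*\frm$) to obtain the theorem in the generality claimed.
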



\subsection{Proof of Theorem \ref{thm:starshaped}}

We shall proceed in four steps.
The first three steps are devoted to the case when $X$ is a nice resolution of $(Y,0)$.
We explain how to reduce the theorem to this case in the fourth and last step.

\smallskip

\noindent {\bf Step 1}. Assume $\pi:X \to (Y,0)$ is a nice resolution of $(Y,0)$, and suppose that $F|_E$ is hyperbolic for all irreducible components $E \subseteq W$.

Then all components are rational curves.
Since a hyperbolic map has exactly two periodic points, it follows that a component can intersect at most two other components.
In particular the dual graph is either a segment or a circle. 

\begin{prop}\label{prop:nocusps}
Suppose $(X,W,F)$ is an admissible data as above.

Suppose $F|_E$ is hyperbolic for any irreducible component of $W$.
Then $W$ cannot be a cycle of rational curves.
\end{prop}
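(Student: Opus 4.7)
The plan is to argue by contradiction. Assume $W = E_1 \cup \cdots \cup E_n$ is a cycle of rational curves (indices read cyclically). Because $F$ preserves every $E_i$, the nodes $p_i := E_{i-1}\cap E_i$ and $q_i := E_i\cap E_{i+1}$ are $F$-fixed; since $F|_{E_i}$ is a hyperbolic automorphism of $\P^1$, it has exactly two fixed points on $E_i$, so these must be $p_i$ and $q_i$. Writing $\mu_i^-, \mu_i^+$ for the multipliers of $F|_{E_i}$ at $p_i, q_i$, one has $\mu_i^-\mu_i^+=1$ and $\abs{\mu_i^\pm}\neq 1$.

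The key ingredient I would invoke is the $F$-equivariant structure on the normal bundle $N_i := N_{E_i/X}$, whose degree equals $E_i^2 =: -d_i$. Recall the standard fact about equivariant line bundles on $\P^1$: if a line bundle of degree $d$ is equivariant for an action with multiplier $\lambda$ at the fixed point $0$, then the weights of the action on the fibers at $0$ and $\infty$ satisfy $\nu_\infty = \lambda^{-d}\nu_0$; this follows from comparing local frames on the two standard charts via the transition $e_\infty = z^d e_0$. I would apply this to $E_i$, with $\lambda = \mu_i^-$ and $d = -d_i$. Since the components of $W$ meet transversally, the normal direction to $E_i$ at $p_i$ (resp.\ $q_i$) coincides with the tangent direction of $E_{i-1}$ (resp.\ $E_{i+1}$), so the transverse weights equal the tangential multipliers of neighbors: $\nu_{p_i} = \mu_{i-1}^+$ and $\nu_{q_i} = \mu_{i+1}^-$. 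The weight formula then reads
\[
\mu_{i+1}^- \cdot \mu_{i-1}^- \,=\, (\mu_i^-)^{d_i} \qquad (i=1,\dots,n).
\]

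Setting $y_i := \log\abs{\mu_i^-}$ and taking logarithms turns this into the cyclic linear system $-d_i y_i + y_{i-1} + y_{i+1} = 0$, which is exactly $M y = 0$ for $M$ the intersection matrix of $W$. By the admissibility hypothesis $M$ is negative definite, hence invertible, forcing $y = 0$; this contradicts $y_i \neq 0$, which follows from the hyperbolicity of $F|_{E_i}$. The step I expect to require the most care is the verification of the equivariant weight formula $\nu_\infty = \lambda^{-d}\nu_0$ together with the dictionary identifying transverse weights at the nodes with tangential multipliers on neighboring components; once these are in hand, the cyclic system $My=0$ is forced by the geometry and the negative-definiteness of the intersection form closes the argument at once.
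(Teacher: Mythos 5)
Your proof is correct, and it takes a genuinely different route from the paper's. The paper deduces the contradiction from Lemma~\ref{lem:eigenvalonintersections}, which in turn relies on the local Green function of Proposition~\ref{prop:greenfnctcontracts}: at each node $p=E\cap E'$ the contracting dynamics and the weights $a_E=\ord_E(\frm\cdot\cO_Y)$ yield
$\log\abs{dF|_E(p)}/a_E+\log\abs{dF|_{E'}(p)}/a_{E'}<0$, and summing cyclically produces a telescoping sum equal to zero. Your argument instead uses the $F$-equivariant structure on the normal bundles $N_{E_i/X}$: the weight formula on $\P^1$ relates the two transverse eigenvalues along $E_i$ via the multiplier and the degree $-d_i$, and transversality of the intersections identifies those transverse weights with the tangential multipliers of the adjacent components. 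This converts the multiplier data into the homogeneous linear system $My=0$ for the intersection matrix $M$, and negative definiteness forces $y=0$, contradicting hyperbolicity. (For a cycle of length two the system degenerates slightly, but the same computation gives $d_1d_2=4$, again incompatible with negative definiteness.) The trade-off is instructive: the paper's proof uses the contracting hypothesis essentially through the Green function and the weights $a_E$, whereas yours never uses that $F$ contracts --- it only needs that $F$ is an automorphism preserving each component, acting hyperbolically on each, and that the cycle has negative definite intersection form. Your argument therefore proves a somewhat stronger statement with less machinery; the paper's argument, on the other hand, fits into a uniform framework (Lemma~\ref{lem:eigenvalonintersections}) that is reused in the proof of Proposition~\ref{prop:dualgraphstructure}. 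As for the sign convention in the weight formula: either convention works, since it changes $M$ to $-M$ and both are invertible, so your conclusion is robust.
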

This proves that $W$ is a chain of rational curves, and we get case (a) of Theorem \ref{thm:starshaped}.
\begin{rmk}
A singularity for which the exceptional divisor of its desingularization is  cycle of rational curves is called a cusp. 
Interesting subgroup of automorphisms of cusp singularities have been constructed by Pinkham in~\cite{pinkham:autocuspsinouehirzebruchsurf} in connection with the automorphism group of compact complex surfaces of Inoue-Hirzebruch's type.
\end{rmk}
We rely on the following lemma.
\begin{lem}\label{lem:eigenvalonintersections}
Pick any point $p \in W$ that is fixed by $F$. 
\begin{itemize}
\item
When $p$ belongs to a unique exceptional component $E$, then 
$dF(p)$ admits a eigenvalue of modulus $<1$ whose eigenvector is transverse to $E$.
\item
When $p$ is the intersection point of two irreducible components $E, E'$ of $W$, then we have
\begin{equation}\label{eq:eigenvalonintersecestimate}
\frac{\log \abs{dF|_E (p)}}{a_E} +\frac{\log \abs{dF|_{E'} (p)}}{a_{E'} } <0,
\end{equation}
where $a_E = \ord_E (\frm \cdot \cO_Y)$ is the order of vanishing of $\pi^*\frm$ along $E$, and $a_{E'} = \ord_{E'} (\frm \cdot \cO_Y)$.
\end{itemize}
\end{lem}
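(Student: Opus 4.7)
The plan is to exploit the local Green function $g$ produced by Proposition~\ref{prop:greenfnctcontracts}, together with its monomial asymptotic near $W$ recorded in~\eqref{eq:estimgreen}. Since $g \circ F^N \le g - C$ holds for arbitrarily large $C$ (upon choosing $N = N(C)$ sufficiently large), comparing the leading behaviour of $g$ and of $g\circ F^N$ at the fixed point $p$ will turn the strict contraction property into a quantitative inequality on the multipliers of $dF(p)$ along the components of $W$.

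The setup I would use is the following. Take $g = g_Y \circ \pi$ as in Lemma~\ref{lem:constrpsh}, and pick local coordinates $(z,w)$ centred at $p$ so that $W = \{z = 0\}$ in Case~1 and $W = \{zw = 0\}$ in Case~2, with $E = \{z = 0\}$ and (in Case~2) $E' = \{w = 0\}$. By~\eqref{eq:estimgreen},
$$
g(z,w) = a\log|z| + b\log|w| + u(z,w),
$$
with $u$ bounded near $p$, where $(a,b) = (a_E, 0)$ in Case~1 and $(a,b) = (a_E, a_{E'})$ in Case~2. In Case~1, the fact that $F$ preserves $\{z=0\}$ forces $F_1(z,w) = z\,\tilde F_1(z,w)$, and I set $\alpha := \tilde F_1(0,0)$, which is an eigenvalue of $dF(p)$. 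A routine induction then yields $F_1^N(z,w) = z\,\psi_N(z,w)$ with $\psi_N(0,0) = \alpha^N$, hence
$$
g \circ F^N(z,w) - g(z,w) = a_E N\log|\alpha| + O(1)
$$
uniformly in $N$ as $(z,w) \to p$. Plugging this into $g\circ F^N \le g - C$ and taking $C$ arbitrarily large forces $|\alpha| < 1$. Any eigenvector of $dF(p)$ associated to $\alpha$ projects non-trivially onto the conormal line $T_pX / T_pE$ and is therefore transverse to $E$.

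Case~2 proceeds along the same lines. Now $F$ fixes both axes through $p$, so $F_1 = z\,\tilde F_1$ and $F_2 = w\,\tilde F_2$; setting $\alpha := \tilde F_1(0,0)$ and $\beta := \tilde F_2(0,0)$, a direct inspection of the restrictions gives $dF|_{E'}(p) = \alpha$ and $dF|_E(p) = \beta$. The same induction produces $F_1^N = z\,\psi_N$ and $F_2^N = w\,\chi_N$ with $\psi_N(0,0) = \alpha^N$ and $\chi_N(0,0) = \beta^N$, so
$$
g\circ F^N(z,w) - g(z,w) = N\bigl(a_E \log|\alpha| + a_{E'} \log|\beta|\bigr) + O(1)
$$
uniformly in $N$ as $(z,w) \to p$. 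Applying again $g\circ F^N \le g - C$ with $C$ arbitrarily large yields $a_E\log|\alpha| + a_{E'}\log|\beta| < 0$, and dividing by $a_E a_{E'} > 0$ this is exactly~\eqref{eq:eigenvalonintersecestimate}. The only delicate point is controlling the remainder $u \circ F^N$: since $|u|$ is bounded by some $M$ on a fixed neighbourhood $V$ of $p$ and $F^N(V') \subseteq V$ for all $N$ once $V'$ is chosen small enough, the term $u \circ F^N$ remains bounded by $M$ uniformly in $N$, so the $O(1)$ terms above are legitimate. I do not anticipate any deeper obstacle beyond this bookkeeping.
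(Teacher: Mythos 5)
Your proof is correct and follows essentially the same route as the paper: use the local Green function $g$ from Proposition~\ref{prop:greenfnctcontracts} together with its monomial asymptotic~\eqref{eq:estimgreen}, expand $F^N$ to first order at the fixed point $p$, plug into $g\circ F^N \le g - C$, and let $(z,w)\to p$ with $N$ held fixed. The paper writes out only the two-component case and declares the one-component case analogous, whereas you spell out both; the only cosmetic difference is your claim that the $O(1)$ error is ``uniform in $N$,'' which is stronger than needed --- once $C$ and hence $N$ are fixed, it suffices (as the paper does) that the error vanishes as $(z,w)\to p$ for that fixed $N$, and the constant $2C_1$ coming from the bound on $u$ is independent of $N$ anyway.
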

\begin{proof}
We treat only the second case, the first being completely analogous and easier.
We denote by $g$ the Green function given by Proposition \ref{prop:greenfnctcontracts}.

Pick local coordinates $(z,w)$ at $p$ such that $E=\{z=0\}$ and $E'=\{w=0\}$
In these coordinates we have
$$
\big|g(z,w) - a_E \log\abs{z} - a_{E'} \log\abs{w} \big| \le C_1 
$$
for some $C_1 >0$, see \eqref{eq:estimgreen}.
Notice that here we use the fact that $X$ is a nice resolution of $(Y,0)$.

Set $\lambda=(dF|_{E'})(p)$ and $\mu=(dF|_{E})(p)$.
Then for any integer $N\ge0$, one can write
$$
F^N (z,w) = \big(\lambda^N z (1+ \eps_N), \mu^N w ( 1+ \eta_N)\big)
$$
with $\eps_N (0) = \eta_N(0) =0$.
Pick $C>2 C_1$. 
By Proposition \ref{prop:greenfnctcontracts}, for any $C > 0$ there exists $N \gg 0$ so that
$$
g( F^{N} (z,w) )\le g(z,w) -C.
$$
It follows
$$
a_{E}\log\abs{\lambda^N z} +  a_{E'}\log\abs{\mu^N w} + \cO(z, w) - C_1
\le
a_{E}\log\abs{z} +  a_{E'}\log\abs{w} + C_1 - C,
$$
and hence
$$
N \big(a_E \log \abs{\lambda} + a_{E'} \log \abs{\mu}\big) \leq 2C_1 - C + \cO(z,w).
$$
Letting $(z,w) \to 0$, we get $a_E\log\abs{\lambda} + a_{E'}\log\abs{\mu} < 0$.

We conclude by dividing the last relation by $a_E a_{E'}>0$.
\end{proof}

\begin{proof}[Proof of Proposition \ref{prop:nocusps}]
Enumerate the exceptional components $E_0, ... , E_{n-1}$ in such a way that 
$E_i \cdot E_j = 1$ if and only if $\abs{i - j} = 1$, and set $E_n := E_0$. 
Set $p_j=E_j \cap E_{j+1}$ for $j=0, \ldots, n-1$, and write $\lambda_j = dF|_{E_j}(p_j)$ and $\mu_j = dF|_{E_{j+1}}(p_j)$.
Since $F|_{E_j}$ is hyperbolic it follows that $\lambda_j = \mu_{j-1}^{-1}$.

Set $a_j:= \ord_{E_j} (\frm \cdot \cO_Y)$. By Lemma \ref{lem:eigenvalonintersections} we get
$$
0> 
\sum_{j=0}^{n-1}
\left(\frac{\log\abs{\lambda_j}}{a_j}+  \frac{\log\abs{\mu_j}}{a_{j+1}}\right)
=
\sum_{j=0}^{n-1}
\left(\frac{\log\abs{\lambda_j}}{a_j}- \frac{\log\abs{\lambda_{j+1}}}{a_{j+1}}\right)
=0,
$$
a contradiction.
\end{proof}

\begin{rmk}
In the paper of Camacho-Movasati-Scardua~\cite{camacho-movasati-scardua:quasihomosteinsurfsing}, our argument is replaced by a suitable use of the Camacho-Sad index formula for holomorphic vector fields on complex surfaces.
\end{rmk}

\medskip

\noindent {\bf Step 2}. We now prove  that there exists at most one component $E_\star$ for which $F|_{E_\star}$ is not hyperbolic.
More precisely we aim at
\begin{prop}\label{prop:dualgraphstructure}
Suppose $F|_{E_\star}$ is not hyperbolic.
Let $d$ be the graph metric on the vertices of the dual graph $\Gamma(W)$ of $W$.
Pick any component $E \neq E_\star$. Then 
\begin{enumerate}
\item $E$ is rational, and $F|_E$ is hyperbolic;
\item there exists a unique component $E'$ intersecting $E$ s.t. $d(E_\star, E') =  d(E_\star, E) -1$;
\item if $p = E \cap E'$, then  $\abs{dF|_{E}(p)} < 1$, and either $E' = E_\star$ and $\abs{dF|_{E'}(p)} = 1$, or
$\abs{dF|_{E'}(p)} > 1$.
\end{enumerate}
\end{prop}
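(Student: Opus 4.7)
\medskip

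\noindent\textbf{Proof plan.} The plan is to proceed by strong induction on the graph distance $n = d(E_\star, E)$. The key auxiliary fact is the following: if $h$ is an automorphism of a compact Riemann surface $E$ with a fixed point $p$ and $|dh(p)| \neq 1$, then necessarily $E$ is rational and $h$ is hyperbolic. Indeed, when the genus of $E$ is at least $1$, the Poincar\'e or flat metric is $h$-invariant and forces $|dh(p)| = 1$; when $E = \P^1$, the classification of M\"obius transformations shows that parabolic and elliptic elements have derivatives of modulus exactly $1$ at their fixed points. This observation will be used repeatedly: every time Lemma~\ref{lem:eigenvalonintersections} yields a fixed point with derivative of modulus $<1$, we will automatically get rationality and hyperbolicity.

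For the base case $n=1$, the intersection point $p = E \cap E_\star$ is fixed by $F$ (since $F$ preserves both components meeting transversally). Because $F|_{E_\star}$ is not hyperbolic, the auxiliary fact forces $|dF|_{E_\star}(p)| = 1$, and Lemma~\ref{lem:eigenvalonintersections} then gives $|dF|_E(p)| < 1$. Applying the auxiliary fact a second time yields (a), and (c) is immediate; (b) holds trivially since $E_\star$ is the only component at distance $0$.

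For the inductive step, I would first establish (b). Any component $\tilde E$ intersecting $E$ satisfies $d(E_\star,\tilde E) \geq n-1$ by the triangle inequality. Suppose for contradiction that two distinct components $E'_1, E'_2$ at distance $n-1$ both intersected $E$, at fixed points $p_1 \ne p_2$. By the induction hypothesis $F|_{E'_i}$ is hyperbolic with attracting fixed point $q_i = E'_i \cap E''_i$, where $E''_i$ is the unique component at distance $n-2$; note that $p_i \neq q_i$ (otherwise three components $E,E'_i,E''_i$ would meet at $q_i$, violating SNC), so $p_i$ must be the repelling fixed point of $F|_{E'_i}$, giving $|dF|_{E'_i}(p_i)| > 1$. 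Applying Lemma~\ref{lem:eigenvalonintersections} at each $p_i$ produces $|dF|_E(p_i)| < 1$. This yields two distinct fixed points of $F|_E$ with derivatives of modulus $<1$, contradicting either the reciprocity of derivatives of a M\"obius transformation (if $E = \P^1$) or the invariance of a hyperbolic/flat metric (if the genus of $E$ is positive). Hence a unique such $E'$ exists, and the same SNC+hyperbolicity argument shows that $E$ meets $E'$ at a single point $p$.

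For (a) and (c) at $E$, note that $E' \ne E_\star$ (since $n \geq 2$), so $F|_{E'}$ is hyperbolic by induction, with attracting fixed point $q = E' \cap E''$; the other fixed point is precisely $p$, which is therefore repelling, $|dF|_{E'}(p)| > 1$. A final application of Lemma~\ref{lem:eigenvalonintersections} gives $|dF|_E(p)| < 1$, and the auxiliary fact then yields rationality of $E$ and hyperbolicity of $F|_E$, completing the induction. The main obstacle I would anticipate is verifying the auxiliary fact carefully in the parabolic $\P^1$ case (where the unique fixed point has derivative $1$) and ensuring that the SNC hypothesis is enough to exclude multiple intersections at the critical moment; both are ultimately routine, so I do not expect serious difficulties beyond bookkeeping.
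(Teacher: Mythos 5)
Your proof is correct and follows essentially the same route as the paper: induction on the graph distance from $E_\star$, with Lemma~\ref{lem:eigenvalonintersections} supplying at each intersection point a stable direction transverse to the component whose derivative has modulus $\geq 1$. The only cosmetic difference is the order of the three parts: you establish the uniqueness claim (b) first by deriving a contradiction from two attracting fixed points of $F|_E$, whereas the paper proves (a) first for a chosen neighbour at distance $n-1$ and then gets (b) and (c) by showing that at any other intersection point $\hat p$ the inductive hypothesis would force $|dF|_{\widehat E}(\hat p)| \geq 1$; both arguments invoke the same SNC and hyperbolicity ingredients, and your ``auxiliary fact'' (that $|dh(p)|\neq 1$ at a fixed point forces $E\cong\P^1$ and $h$ loxodromic) is used implicitly by the paper whenever it passes from $|dF|_E(p)|<1$ to ``$F|_E$ is hyperbolic.''
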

\begin{proof}
We proceed by induction on $n=d(E_\star,E)$.
Suppose first $n=1$, then $E_\star$ is the unique element at zero distance from $E_\star$, hence (b) obviously holds.
Since $dF|_{E_\star} (p)$ is of modulus one by assumption, Lemma \ref{lem:eigenvalonintersections} implies $\abs{dF|_E(p)}<1$ hence $F|_E$ is hyperbolic.
This shows (a) and (c) hold.

Suppose now the result holds for some $n\ge1 $.
Pick a component $E$ such that $d(E_\star, E)=n+1$.
In particular there exists a component $E'$ such that $d(E_\star,E')=n$ and $p:= E \cap E' \neq \emptyset$.
By the inductive hypothesis, there exists a unique component $E''$ such that $E'' \cap E' = \{p'\} \neq \emptyset$ and $d(E_\star,E'')=n-1$.
Moreover, $\abs{dF|_{E'}(p')} <1$ and since $p$ is also fixed it follows that $\abs{dF|_{E'} (p)} >1$.
By Lemma \ref{lem:eigenvalonintersections} we conclude that $\abs{dF|_E(p)}<1$ hence $F|_{E}$ is hyperbolic.
This proves (a).
Suppose $E$ intersects another component $\widehat{E}$.
The intersection point $\hat{p} = E \cap \widehat{E}$ is fixed and $\abs{dF|_E(\hat{p})}> 1$ so that $\abs{dF|_{\widehat{E}}(\hat{p})}< 1$ by Lemma \ref{lem:eigenvalonintersections}.
In particular $d(E_\star,\widehat{E})$ cannot be $\leq n$ otherwise the inductive hypothesis would imply $\abs{dF|_{\widehat{E}}(\hat{p})}\geq 1$, a contradiction.
This shows (b) and (c).
\end{proof}

\medskip

\noindent {\bf Step 3}. Suppose now that there exists a component $E_\star$ such that $F|_{E_\star}$ has infinite order but is not hyperbolic.

Then $E_\star$ is either an elliptic or a rational curve.
The next lemma excludes the former case.
\begin{lem}\label{lem:ellipticfiniteorder}
If $E_\star$ is complex torus, then  $F|_{E_\star}$ has finite order. 
\end{lem}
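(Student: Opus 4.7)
The plan is to argue by contradiction: I will assume $F|_{E_\star}$ has infinite order and derive a contradiction from the negative-definiteness of the intersection form of $W$. The key tool will be the normal bundle $N := N_{E_\star/X}$, viewed as a line bundle on $E_\star$ of degree $E_\star^2$.

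First I reduce to the case where $F|_{E_\star}$ is a translation. Recall that the automorphism group of an elliptic curve fits in a short exact sequence
\[
1 \to E_\star \to \aut(E_\star) \to G \to 1,
\]
with $G$ finite (of order $2$, $4$, or $6$) and $E_\star$ acting on itself by translations. Thus, replacing $F$ by a suitable iterate $F^k$ (which is harmless since the property we aim to establish is equivalent under iteration), I may assume $\tau := F|_{E_\star} = T_a$ is a translation by a non-torsion element $a\in E_\star$. Next I observe that, $F$ being a biholomorphism fixing $E_\star$, its differential preserves $TE_\star$ and hence induces at each $p\in E_\star$ a linear isomorphism $N_p \to N_{\tau(p)}$. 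Assembling these fibrewise maps yields a global isomorphism of line bundles $\tau^\ast N \cong N$ on $E_\star$.

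To conclude I will invoke the classical fact that on an elliptic curve $E_\star$ (identified with $\pic^0(E_\star)$ via Abel--Jacobi), for any line bundle $L$ the map $a \mapsto T_a^\ast L \otimes L^{-1}$ coincides with multiplication by $-\deg L$. Specializing to $L = N$ and to the translation vector $a$ of $\tau$, the isomorphism $\tau^\ast N \cong N$ translates into $(\deg N)\cdot a = 0$ in the group law on $E_\star$; since $a$ is not a torsion point, this forces $\deg N = 0$, i.e. $E_\star^2 = 0$. But condition (e) in the definition of admissible data requires the intersection form of $W$ to be negative definite, so $E_\star^2 < 0$, a contradiction. The only step that requires any care is the reduction to a translation, but this is immediate from the structure of $\aut(E_\star)$; beyond that the argument is a one-line application of the theorem of the square, so I do not anticipate any serious obstacle.
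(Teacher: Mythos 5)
Your argument is correct and is essentially the paper's own proof: both reduce (after passing to a power of $F$) to the case where $F|_{E_\star}$ is a translation, observe that the normal bundle $L$ of $E_\star$ satisfies $\tau^\ast L\cong L$, and then use the fact that a translation by $a$ shifts a degree-$n$ line bundle by $na$ in $\pic^0(E_\star)$ to conclude that $na=0$, so $\tau$ is torsion (the paper phrases this via the divisor decomposition $L\sim (n-1)[0]+[p]$ rather than invoking the theorem of the square by name, but it is the same computation). You are slightly more careful in making the reduction to a translation explicit via the exact sequence $1\to E_\star\to\aut(E_\star)\to G\to 1$; the paper does this implicitly.
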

\begin{proof}
Denote by $L$ the normal bundle of $E_\star$ in $X$, and by $n<0$ its degree.
Write $E_\star = \C / \Lambda$, and suppose $F|_{E_\star}$ is a translation by $\tau\in \C$.
Any divisor of degree $n$ in $E_\star$ is linearly equivalent to $ (n-1) [0] + [p]$ for some $p \in E_\star$ and such a decomposition is unique in the sense $(n-1) [0] + [p] = (n-1) [0] + [p']$ in the Picard group of $E_\star$ iff $p = p'$.
Now write $L = (n-1) [0] + [p]$ and observe that $F|_{E_\star}$ fixes $L$, whence $n [\tau] + (n-1) [0] + [p] = (n-1) [0] + [p]$.
It follows that $n [\tau] = 0$ in $E_\star$ and $F^n|_{E_\star} = \id$ as required.
\end{proof}

\noindent {\bf Step 4}.
Suppose $X$ is smooth and $W$ has normal crossing singularities but
$\pi:X \to (Y,0)$ is not a nice resolution of $(Y,0)$.
This means that $\pi^*\frm$ admits a non-empty finite set $B$ of base points included in $W$.
Since $f^*\frm = \frm$, we have $F(B)=B$.
By applying Proposition \ref{prop:regularization} to a suitable iterate of $F$, there exists a nice resolution $\mu:\widehat{X} \to X$ dominating $X$ obtained by blowing-up (infinitely near) points above $B$.
Set $\widehat{W}=\mu^{-1}(W)$ and pick $\widehat{F}:\widehat{X} \to \widehat{X}$ a lift of $F$.
Pick any sufficiently large integer $N\ge1$ such that  $F^N$ fixes all the irreducible components of $\widehat{W}$.

We may now apply Steps 1--3 to $(\widehat{X},\widehat{W},F^N)$.
If we are in case (a), the support of $\widehat{W}$ is a chain of rational curves such that $\widehat{F}^N|_{\widehat{E}}$ is hyperbolic for any irreducible component $\widehat{E}$ in $\widehat{W}$. By contracting $\mu^{-1}(B)$ we see that $W$ remains a chain of rational curves, such that $F^N|_E$ is hyperbolic for any irreducible component $E$ in $W$.
Since $F$ fixes any component in $W$ by assumption, we get case (a) of the statement.

Suppose we are in case (b).
There exists a unique component $\widehat{E}_\star$ on which the action of $F$ is not hyperbolic.
If $\widehat{W}$ is the support of a chain of rational curves, we can conclude as before that $W$ is a chain of rational curves.
Moreover, if $\widehat{E}_\star \subset \mu^{-1}(B)$, then $F|_E$ is hyperbolic for any component $E$ in $W$, and we get case (a) of the statement.
If $\widehat{E}_\star$ is not contained in $\mu^{-1}(B)$, then $E_\star=\mu(\widehat{E}_\star)$ is the unique component in $W$ on which $F$ has a non-hyperbolic action.

Suppose $\widehat{W}$ is not the support of a chain of rational curves.
If $\widehat{E}_\star$ is not contained in $\mu^{-1}(B)$, then $\mu$ only contracts rational curves in the chains intersecting $\widehat{E}_\star$.
We argue as before and conclude that $W$ satisfies condition (b) of the statement.

If $\widehat{E}_\star \subseteq \mu^{-1}(B)$, then $\widehat{E}_\star$ is rational, and it intersects at least three components in $\widehat{W}$.
Notice that $W$ has simple normal crossings by assumption, and it is obtained by contracting some rational curves in $\widehat{W}$.
We infer that $W$ is in this case a chain of rational curves, with $F|_E$ hyperbolic for any component $E$ of $W$, hence case (a) of the statement.

\begin{rmk}\label{rmk:otherchains}
Suppose $E_\star$ is a rational curve and $F|_{E_\star}$ has infinite order but is not hyperbolic.
Then $F|_{E_\star}$ is either a translation or a rotation of infinite order.
It admits respectively one or two periodic points.
It follows that in this case the closure of $W \setminus E_\star$ is the union of at most two chains of rational curves, so
that $W$ itself is a chain of rational curves.
\end{rmk}

\subsection{Chains of negative rational curves}

We shall call \emph{negative rational curve} a smooth  rational curve $E$ in $X$ such that $E \cdot E \leq -2$.

\begin{thm}\label{thm:betterstarshaped}
Suppose $(X,W,F)$ is an admissible data, $X$ is smooth, $W$ has simple normal crossings, $F$ fixes any irreducible component of $W$.

Then, up to equivalence of admissible data, we are in one of the following two situations.
\begin{enumerate}
\item[(i)] The support of $W$ is a chain of negative rational curves.
The action of $F|_E$ is hyperbolic for all irreducible components $E$ of $W$, but at most one.
\item[(ii)] There exists a component $E_\star$ with  $E_\star \cdot E_\star \leq -1$ such that $F|_{E_\star}$ has finite order.
In this case, the closure of $W \setminus E_\star$ is a disjoint union of a finitely many chains of negative rational curves, and $F|_E$ is hyperbolic for any component $E$ of $W$ different from $E_\star$.
\end{enumerate}
\end{thm}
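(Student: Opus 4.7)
The plan is to bootstrap from Theorem~\ref{thm:starshaped} and then iteratively contract the $(-1)$-curves of $W$ (avoiding, when the situation demands it, the distinguished central component $E_\star$) so as to upgrade all remaining rational components to negative rational curves. I would first record the combinatorial starting point. In case (a) of Theorem~\ref{thm:starshaped} we already have a chain of rational curves on which $F$ acts hyperbolically. In case (b), if $E_\star$ is not a rational curve then Theorem~\ref{thm:starshaped} itself provides that $F|_{E_\star}$ has finite order. If $E_\star$ is rational, either $F|_{E_\star}$ has finite order (and nothing more needs to be said) or $F|_{E_\star}$ has infinite order but is not hyperbolic, in which case Remark~\ref{rmk:otherchains} forces $W$ to be a chain of rational curves, and we are back in the chain situation with one tolerated non-hyperbolic component.

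Having fixed the combinatorics, I would run an iterative Castelnuovo contraction. Pick a component $E\subset W$ with $E\cdot E=-1$ (and with $E\neq E_\star$ whenever $E_\star$ is truly branched, i.e., carries more than two adjacent components); let $\sigma:X\to X'$ be its contraction. Three points need verification. First, the triple $(X',W',F')$ with $W'=\sigma(W)$ and $F'$ the induced map is again an admissible data, equivalent to $(X,W,F)$: smoothness of $X'$ comes from Castelnuovo's criterion; the relation $F(E)=E$ makes $F$ descend to a biholomorphism $F'$ on $X'$; and the negative definiteness of the intersection form on $W'$ follows from Grauert's contractibility criterion, since $W'$ can be further contracted in $X'$ to the same normal singularity produced by contracting $W$ in $X$. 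Second, the combinatorial type is preserved: a chain stays a chain (the two neighbours of the contracted curve acquire a transverse intersection and have their self-intersections grow by one), and a side chain attached to a branched $E_\star$ remains a side chain (possibly shortened or fully eliminated). Third, the restriction of $F$ to every surviving component is unchanged, so hyperbolicity on these components is preserved.

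The procedure terminates after finitely many steps because each step strictly decreases the number of irreducible components of $W$. At the end, no component distinct from $E_\star$ has self-intersection $-1$, so every such component is a negative rational curve. The inequality $E_\star\cdot E_\star\le -1$ then follows automatically from the negative definiteness of the final intersection form, whose diagonal integer entries must all be $\le -1$. Depending on the final dual graph, we are either in case (i) (the graph has become a chain, possibly after fully collapsing all side chains of a branched $E_\star$) or in case (ii) (the graph has a genuine branching at $E_\star$, whose action is of finite order by the first paragraph).

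The main obstacle I expect is the careful verification of the equivariant descent of $F$ and of the persistence of negative definiteness through cascades of contractions, especially when contractions in a side chain propagate back to affect the self-intersection of $E_\star$. This bookkeeping is where the interplay between Castelnuovo and Grauert has to be exploited most often, but it is standard surface-theoretic material and should go through without surprises.
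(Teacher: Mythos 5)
Your proposal follows essentially the same route as the paper's proof: invoke Theorem~\ref{thm:starshaped} (together with Remark~\ref{rmk:otherchains} to upgrade the non-hyperbolic $E_\star$ to finite order), then iteratively contract $(-1)$-curves other than a branched $E_\star$, using the fact that the contracted configuration is still exceptional for the same normal singularity to preserve admissibility and negative definiteness. The paper states this more tersely (contracting within the chains and deducing $E_\star\cdot E_\star\le -1$ from contractibility), but your bookkeeping of the equivariant descent and of the persistence of the chain/star structure is the implicit content of the same argument.
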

Notice that cases (i) and (ii) are not mutually exclusive. The case $E_\star$ is rational, $F|_{E_\star}$ has finite order, and the closure of $W \setminus E_\star$ is the disjoint union of at most two chains of negative rational curves both satisfies (i) and (ii).
\begin{proof}
Suppose first that $W$ is a chain of rational curves.
Since $W$ can be contracted to a point, for any irreducible component $E$ of $W$ we have $E \cdot E \leq -1$.
If there exists $E$ with $E \cdot E=-1$, we can contract this irreducible component and get a shorter chain of rational curves, that can still be contracted to a point.
By induction on the length of the chain, we may suppose that $W$ is a chain of negative rational curves.  Theorem \ref{thm:starshaped} then gives case (i) of the statement. 

If $W$ is not a chain of rational curves then we are in case (b) of Theorem \ref{thm:starshaped} and there exists a unique
component $E_\star$ on which $F$ is not hyperbolic.
By Remark \ref{rmk:otherchains}, we know in fact  that $F|_{E_\star}$ has finite order.
Let $E_1, \ldots, E_n$ be a chain of rational curves in the closure fo $W \setminus E_\star$.
Arguing as before, we can suppose that $E_j \cdot E_j \le -2$ for any $j$.
Since $W$ can be contracted to a point, we infer $E_\star \cdot E_\star \leq -1$.
\end{proof}


\section{Hirzebruch-Jung singularities}

In this  section,we analyze in  detail  the dynamics of an automorphism in a neighborhood of  a chain of rational curves.


\subsection{Dynamics on a chain of rational curves}
Recall that any chain of negative rational curves can be contracted to a cyclic quotient singularity, also called Hirzebruch-Jung singularity.
Any Hirzebruch-Jung singularity $(Y,0)$ is isomorphic to $\C^2$ modulo the action of a finite automorphism of the form
$$
(z,w) \mapsto (\zeta z, \zeta ^q w),
$$
where $\zeta$ is a primitive $m$-th root of unity and $m$ and $q$ are coprime, see~\cite[chapter III.5]{barth-hulek-peters-vanderven:compactcomplexsurfaces}.

We begin with the following observation.
\begin{prop}\label{thm:HJcontraction}
Suppose we are given a chain of negative rational curves in a smooth surface $X$, and an automorphism $F: X \to X$ that leaves the chain invariant.

Then one can contract all curves $E_i$ to a Hirzebruch-Jung singularity $(Y,0)$ and $F$ descends to an automorphism $f:(Y,0) \to (Y,0)$.

Furthermore,  there exist a local analytic diffeomorphism $\wt{f}: (\C^2,0) \to (\C^2,0)$ and a linear automorphism $\gamma$ of $\C^2$ of finite order such that the quotient $\C^2/\langle \gamma \rangle$ is isomorphic to $Y$, and $\wt{f}$ descends to an automorphism of $Y$ that is analytically conjugated to $f$.
\end{prop}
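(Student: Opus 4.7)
The plan is to first contract the chain with Grauert's criterion to realize $Y$ as a Hirzebruch-Jung singularity, then use the fact that $Y$ is globally a finite quotient of $\C^2$ to lift $f$ via covering space theory.

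First I would contract. The intersection form of a chain of rational curves with self-intersections $\le -2$ is negative definite (a standard tridiagonal determinant computation), so Grauert's theorem yields a proper modification $\pi: X \to Y$ collapsing the chain to an isolated normal singular point $0 \in Y$, and the resulting germ $(Y,0)$ has minimal resolution with exceptional divisor exactly this chain. By the classification recalled at the start of the section (Barth--Hulek--Peters--Van~de~Ven), $(Y,0)$ is a cyclic quotient singularity: there is a biholomorphism from $Y$ to $\C^2/\langle \gamma \rangle$ for the linear action $\gamma(z,w) = (\zeta z, \zeta^q w)$ with $\zeta$ a primitive $m$-th root of unity and $m,q$ coprime. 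Denote by $p: \C^2 \to Y$ the quotient map. Since $F$ fixes each component of the chain and $\pi$ contracts those components, $F$ descends through $\pi$ to an automorphism $f: (Y,0) \to (Y,0)$ fixing $0$.

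Second I would lift $f$ to $\C^2$. The restriction $p: \C^2 \setminus \{0\} \to Y \setminus \{0\}$ is a connected unramified covering of degree $m$, and $\C^2 \setminus \{0\}$ is simply connected (real dimension four), so this is the universal cover of $Y \setminus \{0\}$. Covering space theory then produces a unique (once a base-point value is chosen) holomorphic lift $\wt f : \C^2 \setminus \{0\} \to \C^2 \setminus \{0\}$ satisfying $p \circ \wt f = f \circ p$.

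Third I would extend $\wt f$ across the origin and show it is a biholomorphism. Since $p$ is finite and proper with $p^{-1}(0) = \{0\}$ and $f(0)=0$, the lift $\wt f$ stays bounded as its argument approaches $0$; Hartogs' removable singularities theorem in two complex variables then yields a holomorphic extension $\wt f: \C^2 \to \C^2$, and the identity $p \circ \wt f = f \circ p$ extends by continuity and forces $\wt f(0) = 0$. Applying the same construction to $f^{-1}$ produces a lift $\wt g$; the composition $\wt g \circ \wt f$ is a holomorphic lift of the identity, hence equals some deck transformation $\gamma^k$, and replacing $\wt g$ by $\gamma^{-k}\wt g$ makes $\wt g$ the inverse of $\wt f$. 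Thus $\wt f$ is a local biholomorphism at $0$, and $p \circ \wt f = f \circ p$ exhibits the required analytic conjugacy.

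I expect the most delicate step to be the matching of the lifts of $f$ and $f^{-1}$ to produce a genuine biholomorphism rather than a ramified map at $0$: this uses both the freeness of the deck action away from the origin and the removability of the isolated singularity obtained from Hartogs. Everything else reduces to standard covering-space theory together with the cited classification of Hirzebruch--Jung singularities.
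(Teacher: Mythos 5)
Your proof is correct and follows essentially the same route as the paper: contract the chain to a cyclic quotient singularity, note that $\C^2 \setminus \{0\}$ (or a punctured ball, as the paper restricts to) is simply connected so the quotient map is the universal cover, lift $f$ by covering space theory, and extend across the origin by Hartogs. The one detail you supply that the paper leaves tacit is the verification that the Hartogs extension is a biholomorphism (by lifting $f^{-1}$ and adjusting by a deck transformation); the only small imprecision on your side is claiming the lift is defined on all of $\C^2 \setminus \{0\}$ rather than on a punctured ball, since $f$ is only a germ, but this does not affect the argument.
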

\begin{proof}
Contract the chain of rational curves to a Hirzebruch-Jung singularity $(Y,0)$.
Since $F$ is an automorphism and preserves $\bigcup E_i$, it induces an automorphism $f$ on $Y \setminus \{0\}$ which extends to the singularity by normality.

Write $(Y,0)$ as a quotient of $\C^2$ by a finite order automorphism.
Restricting $Y$ if necessary, we may assume that the natural projection $\mu:(\C^2, 0)\to (Y,0)$ induces an unramified covering from $B\setminus\{0\}$ onto $Y\setminus \{0\}$ where $B$ is a small ball centered at $0$.
Since $B\setminus\{0\}$ is simply connected, it follows that $f:(Y,0) \rightarrow (Y,0)$ lifts to $\wt{f}:B\setminus\{0\} \rightarrow B\setminus\{0\}$.
Finally $\wt{f}$ extends through $0$ by Hartogs' Lemma.
\end{proof}


\subsection{Local conjugacy along curves of fixed points}
We shall also need the following refinement.
\begin{thm}\label{thm:HJcurvefixedpoints}
Suppose $(X,W,F)$ is an admissible data, $X$ is smooth, $W$ has simple normal crossings, and $F: X \to X$ an automorphism.

Assume $E_1, \ldots, E_n$ is a chain of negative rational curves in $W$ such that $F(E_i) = E_i$ and $F|_{E_i}$ is hyperbolic for all $i$.
Assume moreover that there exists a component $E_\star$ in $W$ fixed pointwise by $F$ and intersecting transversely $E_1$.

Then one may contract  $\bigcup_i E_i$ to a Hirzebruch-Jung singularity $(Y,0)$, and $F$ induces an automorphism $f:(Y,0) \to (Y,0)$.
Moreover,  there exist coordinates $(z,w) \in \C^2$, a finite-order automorphism $\gamma(z,w) = (\zeta z, \zeta^q w)$ with $\zeta$ a primitive $m$-th root of unity, and $\gcd \{m,q\} = 1$ such that $(Y,0)$ is isomorphic to $\C^2/\langle \gamma \rangle$.
And $f$ lifts to a linear map $(z,w) \mapsto  (z, \alpha w)$ for some $0<\abs{\alpha}<1$.
\end{thm}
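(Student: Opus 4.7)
The strategy is to use Proposition~\ref{thm:HJcontraction} to contract the chain to a Hirzebruch-Jung singularity and pass to the cyclic cover $\mu : (\C^2, 0) \to (Y, 0) = (\C^2/\langle \gamma \rangle, 0)$ with $\gamma(z, w) = (\zeta z, \zeta^q w)$. The extra input here is that $E_\star$ provides a curve of fixed points of $f$ through the singular point, which lifts to a curve of fixed points of $\wt{f}$ in $\C^2$. That fixed curve forces $\wt{f}$ to be of the desired linear form.

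\textbf{Reduction to $\C^2$ and identification of the fixed axis.} First I apply Proposition~\ref{thm:HJcontraction} to contract $\bigcup E_i$ and obtain $f : (Y, 0) \to (Y, 0)$ together with the cyclic cover $\mu$ and a lift $\wt{f} : (\C^2, 0) \to (\C^2, 0)$. Because $E_\star$ meets the chain transversely at a single smooth point of $E_1$, its image $C \subset (Y, 0)$ is a smooth analytic curve through $0$ which attaches to the end of the dual graph corresponding to one of the two toric axes of $\C^2/\langle \gamma \rangle$. After possibly swapping $z$ and $w$ (which only replaces $q$ by its inverse mod $m$), I may arrange $\mu^{-1}(C) = \{w = 0\}$.

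\textbf{Pointwise fixing of the axis and commutation with $\gamma$.} Since $F$ fixes $E_\star$ pointwise, $f$ fixes $C$ pointwise, so $\wt{f}$ preserves $\{w = 0\}$ and sends each $q_0 \in \{w = 0\}$ to some $\gamma^{j(q_0)}(q_0)$. As $j$ is locally constant on the connected set $\{w = 0\}$, it is a constant integer $j_0$, and replacing the lift $\wt{f}$ by $\gamma^{-j_0} \circ \wt{f}$ I may assume $\wt{f}|_{\{w = 0\}} = \id$. Next, $\wt{f} \circ \gamma \circ \wt{f}^{-1}$ is another lift of $f$, hence equal to $\gamma^k$ for some $k$; evaluating on $\{w = 0\}$ gives $\zeta^{qk} = \zeta^q$, and since $\gcd(q, m) = 1$ this forces $k \equiv 1 \pmod{m}$, so $\wt{f}$ commutes with $\gamma$. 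Writing $\wt{f}(z, w) = (z + w\,a(z, w),\, w\,b(z, w))$ and setting $\alpha := b(0, 0)$, commutation with $d\gamma(0) = \mathrm{diag}(\zeta, \zeta^q)$ together with the fixed axis gives $d\wt{f}(0) = \mathrm{diag}(1, \alpha)$ in the generic case $\zeta \neq \zeta^q$ (the degenerate case $q \equiv 1 \pmod m$ only requires a trivial linear change). The contracting condition $\bigcap_n F^n(X) \subseteq W$ rules out $|\alpha| \ge 1$: otherwise points with $w \neq 0$ arbitrarily close to $\{w = 0\}$ would have $w$-coordinate not decaying under iteration of $\wt{f}$, contradicting convergence of orbits to $W$. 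Invertibility gives $\alpha \neq 0$, so $0 < |\alpha| < 1$.

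\textbf{Equivariant linearization.} It remains to linearize $\wt{f}$ to $(z, \alpha w)$ without disturbing the form of $\gamma$. Since $\{w = 0\}$ is a pointwise-fixed curve and the transverse multiplier $\alpha$ has modulus strictly less than one, the strong stable foliation of $\wt{f}$ near $\{w = 0\}$ consists of smooth analytic disks $\Sigma_{z_0}$, one through each point of $\{w = 0\}$, depending holomorphically on the base. On each leaf, Koenigs' theorem provides a canonical linearizing coordinate
\[
w'(z, w) := \lim_{n \to \infty} \alpha^{-n}\, \pi_2(\wt{f}^n(z, w)),
\]
and this limit converges to a holomorphic function on a neighborhood of $0$. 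Taking $(z_0, w')$ as new coordinates, with $(z_0, 0)$ the unique fixed point on the leaf through $(z, w)$, one checks that $\wt{f}$ reads $(z_0, w') \mapsto (z_0, \alpha w')$ and that the change of coordinates is tangent to the identity at $0$. Because $\gamma$ commutes with $\wt{f}$, it sends $\Sigma_{z_0}$ to $\Sigma_{\zeta z_0}$, and substituting $\gamma(z, w) = (\zeta z, \zeta^q w)$ into the defining limit gives $w'(\gamma(z, w)) = \zeta^q w'(z, w)$. Thus in the new coordinates $\gamma$ still has the prescribed form $(\zeta z_0, \zeta^q w')$, completing the proof.

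\textbf{Main obstacle.} The delicate point is the last step: the linearizing change of coordinates must be simultaneously $\gamma$-equivariant. Phrasing the linearization through the intrinsic strong stable foliation and the intrinsic Koenigs coordinate makes the equivariance automatic once $\wt{f}$ and $\gamma$ are known to commute, so the real work is hidden in the earlier step where one uses the pointwise fixing of $\{w = 0\}$ to upgrade the lift $\wt{f}$ to a genuine commuting lift.
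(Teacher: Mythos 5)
Your overall strategy matches the paper's: pass to the cyclic cover $\mu : (\C^2,0) \to (Y,0)$, arrange the lift $\wt{f}$ to fix the lifted curve pointwise and to commute with $\gamma$, then linearize transversally via a Koenigs-type construction that is $\gamma$-equivariant because it is intrinsic. The gap is in the reduction to coordinates where the fixed curve is the axis $\{w=0\}$. You claim that, because $E_\star$ attaches at the end of the chain, its image $C$ corresponds to a toric axis and therefore (after possibly swapping $z$ and $w$) $\mu^{-1}(C) = \{w=0\}$. This does not follow. The combinatorial fact only tells you that $\mu^{-1}(C)$ is a smooth $\gamma$-invariant curve \emph{tangent} to the coordinate axis $\{w=0\}$, not equal to it: any curve $\{w = \sigma(z)\}$ with $\sigma(\zeta z) = \zeta^q\sigma(z)$ and $\sigma(0)=\sigma'(0)=0$ is an admissible lift. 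Your subsequent steps rely on the identification being exact — you write $\wt{f}(z,w) = (z + w\,a(z,w),\, w\,b(z,w))$, you read $d\wt{f}(0) = \on{Diag}(1,\alpha)$ off from commutation with the diagonal $d\gamma(0)$, and you compute $w'(\gamma(z,w)) = \zeta^q w'(z,w)$ using the formula $\pi_2\circ\gamma = \zeta^q\pi_2$ — all of which presuppose that the curve of fixed points is exactly $\{w=0\}$.

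What is true is that one can \emph{arrange} $\mu^{-1}(C) = \{w=0\}$ by a suitable $\gamma$-equivariant change of coordinates, but that is a nontrivial step requiring an argument. This is precisely what the paper's proof supplies: it first identifies the lifted curve $\wt{E}$ as the center manifold of $\wt{f}$ (hence smooth) and the stable manifold $\wt{D}$ through $0$, picks defining equations $\phi$ and $\psi$ of $\wt{D}$ and $\wt{E}$ respectively, shows using $\gamma$-invariance of those curves that their linear parts can be taken to be the coordinate functions, checks $\phi\circ\gamma = \zeta\phi$ and $\psi\circ\gamma = \zeta^q\psi$, and then conjugates by $\Phi = (\phi,\psi)$, which straightens both curves while keeping $\gamma$ diagonal; it then further uses a $\gamma$-equivariant change rectifying the stable foliation to $\{z=\text{const}\}$ before applying Koenigs. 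Once you insert a proof of the straightening in place of the toric-axis assertion, the remaining steps of your argument (the commutation $k\equiv 1 \bmod m$, the bound $0<|\alpha|<1$, the intrinsic Koenigs coordinate and its equivariance) are sound and essentially identical to the paper's.
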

\begin{proof}
By Proposition~\ref{thm:HJcontraction} there exists a linear automorphism $\gamma(z,w) = (\zeta z, \zeta^q w)$ such that $\C^2/\langle \gamma \rangle$ is isomorphic to the Hirzebruch-Jung singularity $(Y,0)$ obtained by contracting the chain of negative rational curves $E_1, \ldots, E_n$.
Moreover we can lift $f:(Y,0) \to (Y,0)$ to an automorphism $\wt{f}: (\C^2,0) \to (\C^2,0)$ such that $\wt{f} \circ \gamma=\gamma^k \circ \wt{f}$ for a suitable $k \in \N$ with $\gcd\{k,m\}=1$.
Set $\wt{E}=\mu^{-1}(\pi(E_\star))$, where $\mu$ is the canonical projection $\C^2 \to \C^2/\langle \gamma \rangle$, and $\pi:X \rightarrow Y$ is the contraction map of $E_1, \ldots, E_n$.

Since $E_\star$ is a curve of fixed points by $F$, we get that $\wt{E}$ is a (possibly singular reducible) curve of fixed points for $\wt{f}^m$.
Replacing $\wt{f}$ by  $\gamma^l \circ \wt{f}$ for a suitable $l$ if necessary, we can suppose that $\wt{E}$ is a curve of fixed points for $\wt{f}$.
It follows that $1$ is an eigenvalue for $d\wt{f}(0)$.
Take a point $\wt{p} \in \wt{E} \setminus \{0\}$.
Since $\mu$ is an unramified covering outside $0$ and $\pi$ is a biholomorphism on $E_\star \setminus \bigcup_j E_j$, we infer $\det{d\wt{f}(\wt{p})}=\det{dF(p)}=\alpha$ with $0 < \abs{\alpha} < 1$, and $p=\pi^{-1}(\mu(\wt{p}))$.
By letting $\wt{p}$ tend to $0$, we get that the eigenvalues of $d\wt{f}_0$ are $1$ and $\alpha$.
It follows that $\wt{E}$ is the central manifold of $\wt{f}$ at $0$, and hence it is smooth.
The condition $\wt{f} \circ \gamma= \gamma^k \circ \wt{f}$ restricted to $\wt{E}$ implies $k=1$.
We denote by $\wt{D}$ the stable manifold at $0$, that is smooth and transverse to $\wt{E}$ at $0$.

Pick $\phi,\psi \in \frm$ such that $\wt{D}=\{\phi(z,w)=0\}$ and $\wt{E}=\{\psi(z,w)=0\}$.
Denote by $\phi_1=az+bw$ (resp., $\psi_1$) the linear part of $\phi$ (resp., $\psi$).
Since $\wt{D}$ is invariant by the action of $\gamma$, we infer that $a\zeta z + b \zeta^q w$ is proportional to $az + bw$.
If $q\neq 1$, we deduce that either $a=0$ or $b=0$.
If $q=1$, then $\gamma$ is a homothety, and we can suppose $a=0$ or $b=0$ up to a linear change of coordinates.
We can argue analogously for $\wt{E}$.
It follows that we can assume $\phi_1=z$ and $\psi_1=w$.

By direct computation, we get $\phi \circ \gamma = \zeta \phi$ and $\psi \circ \gamma = \zeta^q \psi$.
Then $\Phi(z,w)=(\phi(z,w), \psi(z,w))$ defines an automorphism on $(\C^2,0)$ such that $\Phi \circ \gamma = \gamma \circ \Phi$.
After conjugating by $\Phi$, we get $\wt{E}=\{z=0\}$, $\wt{D}=\{w=0\}$, and $\gamma(z,w)=(\zeta z, \zeta^q w)$.

The stable manifolds at any point $\wt{p}\in \wt{E}$ defines a $\gamma$-invariant holomorphic foliation $\wt{\mc{F}}$.
Up to a $\gamma$-equivariant change of coordinates, we can then suppose that $\wt{\mc{F}}$ is given by $\{z=\text{const}\}$.
In these coordinates, $\wt{f}$ is given by
$$
\wt{f}(z,w)=\big(z, \alpha w (1+\eps(z,w))\big),
$$
where $w$ divides $\eps(z,w)$.
Since $\wt{f}$ is $\gamma$-invariant, we infer $\eps \circ \gamma = \eps$.
For any $z$, the Koenigs theorem (see for example \cite[section 6.1]{milnor:dyn1cplxvar}) produces a holomorphic invertible germ $\eta_z:(\C,0)\to (\C,0)$ that conjugates $w \circ \wt{f}(z,w)$ to $w \mapsto \alpha w$.
The change of coordinates $\eta_z$ is given by
$$
\eta_z(w):=\eta(z,w):= w \prod_{n=0}^\infty \big(1+\eps \circ \wt{f}^n(z,w)\big),
$$
so that
$$
\eta \circ \gamma(z,w)
= \zeta^q w \prod_{n=0}^\infty \big(1+\eps \circ \wt{f}^n \circ \gamma(z,w)\big)
= \zeta^q w \prod_{n=0}^\infty \big(1+\eps \circ \wt{f}^n (z,w)\big)
= \zeta^q \eta(z,w).
$$
We conclude by conjugating $\tilde{f}$ by the map $(z,w) \mapsto (z, \eta_z(w))$, that is $\gamma$-invariant.
\end{proof}


\section{Orbifold Structures}\label{sec:orbifold}

In this section, we recall some properties of orbifold structures on compact Riemann surfaces
and the notion of orbibundle. References include \cite{scott:geometries3mflds}  or \cite{satake:gaussbonnetVmflds,furuta-steer:seifertfibredhomology} where orbifolds are referred to as $V$-manifolds. 


\subsection{Orbifolds}

An orbifold structure on a  Riemann surface $S$ is the data of a finite collection of
points $\sum m_i p_i$ with multiplicities $m_i \in \N^*$. The multiplicity $\mult(p)$ of any point $p$ in $S$ is defined to be $m_i$ if $p = p_i$ and $1$ otherwise. To simplify  we shall talk about orbifold in place of orbifold structure on a compact Riemann surface.

An orbifold chart at a point $p$ with multiplicity $n$ is an effective holomorphic action of $\Z/n\Z$ on the unit disk $\D$ fixing only the origin, and a holomorphic map of $\D$ to a neighborhood $U$ of $p$ in $S$ that factors through
$\D / (\Z/n\Z)$ as an isomorphism. 

\begin{eg}
Pick any coprime integers $p,q \ge 1$.
The weighted projective space $\P^1(p,q)$ is defined as the quotient of $\C^2 \setminus \{ (0,0)\}$ modulo the action of $\C^*$
given by $t\cdot (x,y) = (t^p x, t^q y)$. The chart $\{x=1\}$ is isomorphic to $ \C / \langle \zeta_p^q \rangle$ for a primitive $p$-th root of unity $\zeta_p$, whereas  the chart $\{y=1\}$ can be analogously identified  to $ \C / \langle \zeta_q^p \rangle$. It follows that $\P^1(p,q)$
 is diffeomorphic to the Riemann sphere, and
 carries a natural orbifold structure where $\mult ([x:y]) =1$ if $xy \neq 0$, $\mult ([1:0]) = p$ and $\mult ([0:1]) = q$.
\end{eg}

An orbifold  map  $\bar{f}: (S, n) \to (S', n')$  is the data of a holomorphic map $f: S \to S'$ that lifts locally to orbifold charts. In other words, it is a holomorphic map $f$ such that $\mult (p ) \deg (f,p)$ is a multiple of $\mult (f(p))$  for any $p\in S$.
When equality holds $\mult (p ) \deg (f,p) = \mult (f(p))$ for all $p$, then the orbifold map is said to be unramified.
An  orbifold covering map is an orbifold map that is proper and unramified.

Write $\Aff$ for the group of affine transformations of the complex plane, and $\bb{H}$ for the upper-half plane.
Observe that any discrete group of $\Aff$ (resp. $\PSL(2,\R)$) acting properly discontinuously  on $\C$ (resp. on $\bb{H}$)
defines a natural orbifold structure on the quotient space $\C /G$ (resp. $\bb{H} /G$). The multiplicity of a given point is 
then equal to the order of the isotropy group of one (or any) of its preimage in $\C$ (resp. in $\bb{H}$).

\begin{thm}\label{thm:class-orbifold}
Suppose $S$ is an  orbifold whose underlying topological space is compact. Then we are in one of the following four (exclusive) situations:
\begin{enumerate}
\item $S$ is a weighted projective space;
\item there exists a finite group $G$ of $\PGL (2, \C)$ such that $S$ is isomorphic to $\P^1 / G$;
\item there exists a discrete subgroup $G$ of $\Aff (\C)$ acting cocompactly on $\C$ such that $S$ is isomorphic to $\C / G$;
\item there exists a discrete subgroup $G$ of $\PSL (2, \R)$ acting cocompactly on $\bb{H}$ 
such that $S$ is isomorphic to $\bb{H} / G$.
\end{enumerate}
\end{thm}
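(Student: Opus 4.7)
The plan is to case-split on the sign of the orbifold Euler characteristic
$$\chi^{\on{orb}}(S) := \chi(\abs{S}) - \sum_{i} \left(1 - \frac{1}{m_i}\right),$$
where $\abs{S}$ is the underlying compact Riemann surface of genus $g$ and the orbifold points $p_i$ carry multiplicities $m_i$. Its sign should dictate the model geometry ($\P^1$, $\C$ or $\bb{H}$) and thus select cases (b), (c) or (d), while case (a) arises as the precise obstruction to any such uniformization. The overall strategy is to first isolate the \emph{bad} orbifolds (those admitting no finite orbifold cover by a smooth Riemann surface), and then uniformize what remains.

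First I would identify the bad orbifolds. A direct analysis of possible ramifications, using the Riemann--Hurwitz formula, shows that the only compact one-dimensional bad orbifolds are those with $\abs{S} \cong \P^1$ and either a single orbifold point of multiplicity $m \ge 2$, or exactly two orbifold points of coprime distinct multiplicities $(p,q)$. In both sub-cases one checks by hand that $S$ is isomorphic as an orbifold to the weighted projective space $\P^1(1,m)$, respectively $\P^1(p,q)$, yielding case (a).

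For a good orbifold $S$, fix a finite orbifold cover $\Sigma \to S$ by a compact Riemann surface $\Sigma$ without orbifold structure. Classical uniformization applied to $\Sigma$ gives a universal cover $\wt{\Sigma}\in \{\P^1, \C, \bb{H}\}$, and the deck group $\Gamma$ of $\wt{\Sigma} \to \Sigma$ is a finite-index normal subgroup of the orbifold fundamental group $G := \pi_1^{\on{orb}}(S)$, sitting in a short exact sequence
$$1 \to \Gamma \to G \to \on{Deck}(\Sigma \to S) \to 1.$$
The group $G$ extends the $\Gamma$-action to a properly discontinuous biholomorphic action on $\wt{\Sigma}$ with quotient orbifold $S$. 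The classification of biholomorphic automorphism groups of the three model surfaces then forces $G$ into $\PGL(2,\C)$, $\Aff(\C)$ or $\PSL(2,\R)$ respectively, and one reads off cases (b), (c), (d) depending on whether $\wt{\Sigma}$ is $\P^1$ (in which case $G$ is automatically finite), $\C$ (where cocompactness of the $G$-action is inherited from that of $\Gamma$) or $\bb{H}$.

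The main technical obstacle is the existence of the finite manifold cover itself: it amounts to producing a torsion-free finite-index normal subgroup of $\pi_1^{\on{orb}}(S)$. This is classical in the topological category via the presentation
$$\pi_1^{\on{orb}}(S) = \left\langle a_1, b_1, \ldots, a_g, b_g, x_1, \ldots, x_k \,\middle|\, x_i^{m_i} = 1,\ \prod_j [a_j, b_j]\prod_i x_i = 1 \right\rangle$$
and an explicit finite quotient (or by Selberg's lemma once any linear representation is at hand). Promoting the resulting topological cover to a branched analytic cover requires verifying that the local isotropy groups are compatible with the complex structure on $\abs{S}$, which I would carry out chart-by-chart using the standard orbifold charts $\D \to \D/(\Z/m)$.
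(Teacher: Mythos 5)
The paper does not prove this statement; it cites Scott's \emph{The geometries of 3-manifolds} (Theorems 2.3 and 2.4) and Furuta--Steer for the proof. Your sketch therefore genuinely does the work, and the strategy you describe---orbifold Euler characteristic to isolate the bad orbifolds, then uniformization of a torsion-free finite-index smooth cover for the good ones, with the orbifold fundamental group inheriting a biholomorphic action on the universal cover---is precisely the classical argument found in those references.

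There is, however, a gap in your identification of the bad orbifolds. You claim via Riemann--Hurwitz that a compact bad $1$-dimensional orbifold is $\P^1$ with one cone point of order $m\ge 2$, or with two cone points of \emph{coprime} distinct orders $(p,q)$. The coprimality restriction is not correct: the bad orbifolds are $\P^1$ with one cone point, or with two cone points of \emph{unequal} orders $p\ne q$, coprime or not. For instance, take $\P^1$ with orbifold points of multiplicities $2$ and $4$: its orbifold Euler characteristic is $\tfrac12+\tfrac14=\tfrac34>0$, so a degree-$n$ cover by a genuine Riemann surface would itself have positive Euler characteristic, hence be $\P^1$, and one would need $\tfrac{3n}{4}=2$, i.e.\ $n=\tfrac83$, which is impossible. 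More generally for $p<q$, ramification over the order-$q$ point forces any smooth covering degree to be a multiple of $q$, whereas the Euler characteristic forces the degree to equal $2pq/(p+q)<2q$, and these two constraints conflict whenever $p\ne q$. Once you allow $\gcd(p,q)>1$, the resulting bad orbifold is \emph{not} a weighted projective space in the sense the paper defines them (the Example only defines $\P^1(p,q)$ for coprime $p,q$), so this example actually falls outside all four alternatives; the theorem as literally stated is slightly imprecise, and the cited sources state the bad case in the correct form (one cone point, or two of unequal order). This does not affect the rest of the paper: in the proof of Theorem~\ref{thm:class-admissibledata}, the bad-orbifold case (a rational curve with one or two multiple points) is handled separately and never uses the weighted-projective-space characterization.
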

We refer to~\cite[Theorems 2.3 and 2.4]{scott:geometries3mflds} for proofs, see also~\cite[Theorems 1.1 and 1.2]{furuta-steer:seifertfibredhomology}.
It is of common use to say that an orbifold is \emph{good} when it falls in one of the three cases (b--d).

\smallskip

By Selberg's lemma any finitely generated subgroup of  $\GL(n,\C)$ admits a normal torsion-free subgroup, see~\cite[Lemma 8]{selberg:discgroupssymspaces} or~\cite{bundgaard-nielsen:normalsubgroupsfiniteindex,fox:fenchelconjFgroups} for $n=2$. It follows that any good orbifold admits a finite orbifold covering by a genuine Riemann surface (i.e. an orbifold with $\mult(p) =1$ for all $p$).

\begin{cor}\label{cor:orbifoldglobalquotients}
Suppose $S$ is a compact good orbifold.
Then there exists a compact Riemann surface $\wt{S}$ and a finite subgroup $G$ of $\aut(\wt{S})$ such that $S$ is isomorphic to $\wt{S} / G$.
\end{cor}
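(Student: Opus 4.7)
The plan is to combine Theorem~\ref{thm:class-orbifold} with Selberg's lemma, exactly as hinted by the paragraph preceding the statement. Since $S$ is good, Theorem~\ref{thm:class-orbifold} realizes it as $S \cong U / G_0$, where $(U, G_0)$ is one of $(\P^1, G_0 \subset \PGL(2,\C))$ with $G_0$ finite, $(\C, G_0 \subset \Aff(\C))$ with $G_0$ discrete and acting cocompactly, or $(\bb{H}, G_0 \subset \PSL(2,\R))$ with $G_0$ discrete and acting cocompactly. In all three situations the multiplicity at a point $p \in S$ coincides with the order of the isotropy subgroup in $G_0$ of any preimage of $p$ in $U$. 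In case (b) we may take $\wt{S} = \P^1$ and $G = G_0$, and there is nothing to prove, so henceforth assume we are in case (c) or (d).

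First I would check that $G_0$ is finitely generated, a hypothesis needed to apply Selberg's lemma. This follows from a standard Milnor--\v{S}varc argument: choosing a compact set $K \subset U$ whose $G_0$-translates cover $U$ (which exists by cocompactness), the finite set $\{g \in G_0 \mid gK \cap K \neq \emptyset\}$ generates $G_0$. Since $G_0$ is then a finitely generated subgroup of $\GL(n,\C)$ for some $n$, Selberg's lemma produces a torsion-free subgroup of finite index; intersecting its (finitely many) $G_0$-conjugates yields a torsion-free \emph{normal} subgroup $H \triangleleft G_0$ of finite index.

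Next, set $\wt{S} := U / H$ and $G := G_0 / H$. Because $H$ is torsion-free and acts properly discontinuously on $U$, no non-trivial element of $H$ has a fixed point, so the projection $U \to \wt{S}$ is an unramified holomorphic covering and $\wt{S}$ is a smooth Riemann surface; it is compact because $G_0$ is cocompact and $[G_0 : H] < \infty$. The finite quotient $G$ acts on $\wt{S}$ by biholomorphisms induced from the $G_0$-action on $U$, and the tautological map $\wt{S} / G \to U / G_0 = S$ is an isomorphism of orbifolds: the stabilizer in $G$ of a point of $\wt{S}$ corresponds under the covering to the stabilizer in $G_0$ of any lift in $U$, so both the underlying holomorphic structure and the multiplicities match. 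The main (mild) obstacle is the invocation of Selberg's lemma together with the normalization trick, but both are classical and available from the references already cited in the paper.
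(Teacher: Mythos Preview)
Your argument is correct and follows precisely the approach sketched in the paragraph immediately preceding the corollary (Theorem~\ref{thm:class-orbifold} combined with Selberg's lemma to extract a torsion-free normal finite-index subgroup). The paper itself does not spell out a proof but simply refers to \cite[Theorem~2.5]{scott:geometries3mflds}, so your write-up is a faithful expansion of the indicated strategy.
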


We refer to~\cite[Theorem 2.5]{scott:geometries3mflds} for a proof.


\subsection{Orbibundles}

An orbibundle on an orbifold $S$ is a complex analytic space $L$ equipped with a map 
$\pi: L \to S$ such that for any $p\in S$ of multiplicity $m\ge 1$ there exists an integer $q$, a primitive $m$-th root of unity $\zeta$, and a neighborhood $U$ of $p$ in $S$ such that $\D \to \D /\langle \zeta z \rangle \simeq U$ is an orbifold chart, and there exists an analytic isomorphism $\pi^{-1}(U) \simeq \D \times \C \mod (\zeta z , \zeta^q w)$ such that the diagram commutes:
$$
\xymatrix{
(z,w) \in \D \times \C \ar[rr]^{ \,\, \mod  (\zeta z , \zeta^q w)} \ar[d]
& & \pi^{-1}(U)
\ar[d]
\\
z \in \D
\ar[rr]^{ \mod  (\zeta z)}
& & U}
$$
Coordinates $(z,w)$ on $\D \times \C$ as above are called an orbifold trivialization of the
orbibundle at $p$.

Orbibundles are also known as line $V$-bundles, see e.g. \cite{satake:gaussbonnetVmflds}.

Observe that $L$ may have (cyclic quotient) singularities whereas $S$ is always smooth. 
In other words, an orbifold line bundle needs not be a locally trivial fibration over an orbifold point.

Pick $L \to S$ any line bundle on a compact Riemann surface, and suppose $G$ is a finite group
acting linearly on $L$. This means that $G$ acts  
by linear transformations on the fibers. One may then form the quotient space $L/G$, and one checks that  
this space has a natural structure of orbibundle over the orbifold $S/G$.
Conversely, one has
\begin{thm}
\label{thm:class-orbiline}
Let $L\to S$ be any orbibundle on a compact good orbifold.

Then there exists a holomorphic line bundle $L' \to S'$ over a (genuine)  compact Riemann surface $S'$ and a finite group $G$ acting linarly on $L'$  such that $L$ is isomorphic to $L'$ quotiented by the action of $G$.
\end{thm}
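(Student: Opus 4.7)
\emph{Plan.} By Corollary~\ref{cor:orbifoldglobalquotients}, write $S = \wt S/G$ for a compact Riemann surface $\wt S$ and a finite subgroup $G\subset \aut(\wt S)$, with quotient map $\rho:\wt S \to S$. Since $\wt S$ is smooth, the stabilizer $H_{\wt p}$ of any point $\wt p\in \wt S$ acts effectively on a disk and is therefore cyclic, of order equal to the orbifold multiplicity of $\rho(\wt p)$. The plan is to construct a holomorphic line bundle $\pi':L' \to \wt S$ equipped with a fiberwise-linear $G$-action lifting the $G$-action on $\wt S$, and to identify $L \cong L'/G$ as orbibundles over $\wt S/G = S$.

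\emph{Construction of $L'$.} For each $\wt p\in \wt S$ with isotropy $H:=H_{\wt p}$, choose an $H$-stable disk $\wt V_{\wt p}\subset \wt S$ such that $\rho$ factors as $\wt V_{\wt p}\to \wt V_{\wt p}/H \xrightarrow{\sim} U_{\wt p}\subset S$, furnishing an orbifold chart centered at $p=\rho(\wt p)$. The definition of orbibundle supplies an orbifold trivialization
\[
\pi^{-1}(U_{\wt p})\;\cong\;(\wt V_{\wt p}\times \C)\big/H,
\]
in which $H$ acts by $(z,w)\mapsto(\zeta z,\zeta^q w)$ for a primitive $|H|$-th root of unity $\zeta$ and some $q$. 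I would then set $L'|_{\wt V_{\wt p}}:=\wt V_{\wt p}\times \C$ together with the canonical $H$-quotient map $q_{\wt p}:L'|_{\wt V_{\wt p}}\to \pi^{-1}(U_{\wt p})$. The orbifold points of $\wt S$ being discrete, away from them $\rho$ is a local biholomorphism and these local pieces agree with the ordinary pullback $\rho^* L$, which is already a honest line bundle.

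\emph{Gluing, $G$-action, and conclusion.} Two orbifold trivializations of $L$ centered at the same point differ by an $H$-equivariant bundle automorphism of $\wt V_{\wt p}\times \C$, hence by multiplication by a holomorphic $H$-invariant unit; the resulting change-of-chart produces a genuine $\cO^*$-cocycle on $\wt S$, and the local pieces glue into a holomorphic line bundle $L'\to \wt S$. For $g\in G$, the identity $g(\wt V_{\wt p})=\wt V_{g\wt p}$ together with the fact that $q_{\wt p}$ and $q_{g\wt p}$ surject onto the same $\pi^{-1}(U_{\wt p})$ forces a unique fiberwise-linear lift of $g$ to $L'$; the family of such lifts defines a linear $G$-action on $L'$ whose quotient, by construction, returns the orbibundle $L$. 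The main obstacle I anticipate is the bookkeeping in the gluing step: different orbifold trivializations at a point are related only modulo the local isotropy, and promoting this ambiguity into a coherent transition cocycle on $\wt S$ that is simultaneously $G$-equivariant is the one step where goodness of $S$ is genuinely used, since it is precisely what provides the smooth global uniformization $\wt S$ on which such lifts make sense.
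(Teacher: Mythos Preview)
The paper does not give its own proof of this theorem: it simply refers the reader to \cite[Theorem~1.3]{furuta-steer:seifertfibredhomology} and \cite[Section~2]{ross-thomas:weightedprojembeddingorbifoldscscKahler}. So there is nothing to compare your argument against in the paper itself.

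Your outline is the standard construction and is essentially correct. One point deserves sharpening. You assert that the fiberwise-linear lift of $g\in G$ to $L'$ is \emph{unique}, but the local picture only gives uniqueness up to the action of the isotropy $H_{g\wt p}$: if $\tilde g$ and $\tilde g'$ are two such lifts then $\tilde g'\circ\tilde g^{-1}$ is a deck transformation of $q_{g\wt p}$, hence an element of $H_{g\wt p}$. The way to recover genuine uniqueness is to use the dense open set $\wt S^\circ=\rho^{-1}(S^\circ)$ away from the orbifold points: there $\rho$ is a local biholomorphism, $L'|_{\wt S^\circ}=\rho^*L|_{\wt S^\circ}$ canonically, and the lift of $g$ is forced. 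Since your local lifts are holomorphic and $\wt S^\circ$ is dense, the extension across the finitely many orbifold preimages is then unique by the identity principle. This same observation also gives the cocycle condition $\widetilde{gh}=\tilde g\circ\tilde h$ for free, since both sides are holomorphic lifts of $gh$ agreeing on $\wt S^\circ$.

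A minor remark on the gluing: the ambiguity in the choice of orbifold trivialization at an orbifold point is not just an $H$-invariant unit, but an element of $H$ composed with multiplication by an $H$-invariant unit (the base map can be $z\mapsto\zeta^j z$). The $H$-part is harmless because it acts trivially after quotienting, but it is worth saying explicitly that you fix the identification $\wt V_{\wt p}\cong\D$ once and for all via a chosen local coordinate on $\wt S$, which pins down the trivialization up to the unit alone.
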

We refer to~\cite[Theorem 1.3]{furuta-steer:seifertfibredhomology} and \cite[Section 2]{ross-thomas:weightedprojembeddingorbifoldscscKahler} and references therein.


\section{The Main Theorem}

In this section we prove a more precise version of  Theorem~\ref{thm:main} classifying
contracting automorphisms of a complex normal surface singularity.
Before stating our main result precisely we describe some examples.

\begin{eg}\label{nrm:HJ}
Pick any two coprime integers $m,q \ge 1$, and $\zeta$ a primitive $m$-th root of unity.
Denote by $\gamma$ the automorphism of $\C^2$ of order $m$ defined by $\gamma(z,w)=(\zeta z, \zeta^q w)$.
Suppose $\wt{f}$ is an automorphism of $\C^2$ of one of the following forms:
\begin{enumerate}
\item $\wt{f}(z,w)=(\alpha z , \beta w )$,
with $\alpha, \beta \in \C$, $0 < \abs{\beta} \leq \abs{\alpha} < 1$;
\item $\wt{f}(z,w)=(\alpha z , \alpha^u  w + z^u)$,
with $0 <  \abs{\alpha} < 1$, $u \in \N^*$ and $q \equiv u \mod m$;
\item $\wt{f}(z,w)=(\beta w , \alpha z)$, with $\alpha, \beta \in \C$, $0 < \abs{\alpha\beta} < 1$, and $q^2 \equiv 1 \mod m$.
\end{enumerate}
The automorphism $\wt{f}$ then descends to a contracting automorphism of the Hirzebruch-Jung singularity
$(Y,0)=(\C^2,0)/\langle \gamma \rangle$.
\end{eg}

\begin{eg}\label{nrm:2}
Let $L \to S$ be a holomorphic line bundle of negative degree on a Riemann surface, and denote by
$F_\alpha : L \to L$ the bundle map such that the restriction of $F_\alpha$ to each fiber is conjugated to 
$w \mapsto \alpha w$ for some $| \alpha| < 1$. The contraction of the zero section of $L$
gives rise to a normal cone singularity $(Y,0)$, and $F_\alpha$ induces a contracting automorphism 
$f: (Y,0)\to (Y,0)$ since $| \alpha| < 1$. 

Observe that $Y$ is the normalization of the affine space $\textup{Spec} \left( \bigoplus_{n\ge 0}  H^0( Y, L^{\otimes -n})\right)$. It is thus endowed with a natural proper $\C^*$-action, and $f$ belongs to the flow induced by this action. 
\end{eg}

\begin{eg}\label{nrm:3}
Let $L \to S$ be a holomorphic line bundle of negative degree on a Riemann surface, and choose
$\phi: S \to S$ an automorphism of finite order such that $\phi^* L$ is isomorphic to $L$.
Fix such a bundle isomorphism $\Phi: L \to \phi^* L $. The composite map $F$ on the total space of $L$
described in the following commutative diagram
$$
\xymatrix{
L
\ar@/^2pc/[rrr]^F
\ar[r]^{\Phi}
\ar[d]
&
\phi^*L
\ar[r]
\ar[d]
&
L
\ar[r]^{F_\alpha}
\ar[d]
&
L
\ar[d]
\\
S
\ar[r]^{\id}
&
S
\ar[r]^{\phi}
&
S
\ar[r]^{\id}
&
S
}
$$
 descends to $(Y,0)$ and induces a contracting automorphism if $|\alpha|$ is sufficiently small.
\end{eg}

\begin{eg}\label{nrm:bundle}
In the same situation as in the previous example, 
suppose moreover that we are given a finite group $G$
acting by automorphisms on $S$ and linearly on $L$, whose action commutes with $F$. 
Then $F$ descends to the quotient space $Y/G$ as a contracting automorphism. The singularity 
$Y/G$ is again weighted homogeneous since it carries a proper $\C^*$-action.

Observe that $L/G \to S/G$ is an orbibundle. Conversely by Theorem~\ref{thm:class-orbiline}  any orbi-bundle 
$L' \to S'$ of negative degree on a good orbifold arises as a quotient of a genuine holomorphic line bundle.
In particular, one may contract the zero section of $L'$ to a weighted singularity. Since $L'$ carries a natural proper $\C^*$-action,  this singularity also supports contracting automorphisms. 
\end{eg}

\begin{thm}\label{thm:class-admissibledata}
Any contracting automorphism of a complex normal surface singularity is obtained by one of the constructions explained in Examples~\ref{nrm:HJ} or ~\ref{nrm:bundle} above.
\end{thm}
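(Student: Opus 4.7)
The strategy is to lift $f$ to a nice resolution, apply Theorem~\ref{thm:betterstarshaped} to obtain a two-case dichotomy, and then reconstruct $f$ from the local normal forms furnished by Section~5, glueing them along the central component via the orbibundle classification of Theorem~\ref{thm:class-orbiline}.

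First I would fix a contracting automorphism $f:(Y,0)\to(Y,0)$, lift it by Proposition~\ref{prop:regularization} to an automorphism $F:X\to X$ of a nice resolution $\pi:X\to Y$, and shrink $X$ using the corollary of Proposition~\ref{prop:greenfnctcontracts} to obtain an admissible data $(X,W,F)$ with $W=\pi^{-1}(0)$. Replacing $F$ by an iterate $F^N$ (the constructions in Examples~\ref{nrm:HJ} and \ref{nrm:bundle} are both stable under passage to and back from iterates, since they already involve cyclic groups of automorphisms in their definition), I may assume $F$ fixes each irreducible component of $W$. Theorem~\ref{thm:betterstarshaped} then places us either in case (i), where $W$ is a chain of negative rational curves with $F$ hyperbolic on every component except possibly one, or in case (ii), where there is a unique central component $E_\star$ on which $F$ has finite order and the closure of $W\setminus E_\star$ is a finite disjoint union of hyperbolic chains of negative rational curves.

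In case (i), Proposition~\ref{thm:HJcontraction} produces a lift $\wt f$ of $f$ to an automorphism of $(\C^2,0)$ commuting up to a power with a generator $\gamma(z,w)=(\zeta z,\zeta^q w)$ of the cyclic quotient. Since $f$ contracts, both eigenvalues of $d\wt f(0)$ have modulus $<1$. A classification of the linear maps commuting with $\gamma^k$ — distinguishing the generic diagonal case, the resonant Jordan block, and the swap of the two eigendirections — followed by a Poincar\'e--Dulac normalization that preserves the $\gamma$-equivariance, identifies $\wt f$ with one of the three normal forms of Example~\ref{nrm:HJ}. In case (ii), I would first pass to a further iterate so that $F$ fixes $E_\star$ pointwise. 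At each intersection point $p_i=E_\star\cap E_{i,1}$ with a surrounding chain, Theorem~\ref{thm:HJcurvefixedpoints} furnishes local coordinates $(z_i,w_i)$ and a cyclic action $\gamma_i(z,w)=(\zeta_i z,\zeta_i^{q_i}w)$ in which $E_\star=\{z_i=0\}$, the chain contracts to the quotient singularity $\C^2/\langle\gamma_i\rangle$, and $F$ lifts to the linear model $(z,w)\mapsto(z,\alpha_i w)$ with $0<|\alpha_i|<1$. The transverse multiplier of $F$ is holomorphic along the connected curve $E_\star$, hence constant, so all $\alpha_i$ equal a common $\alpha$. After contracting the surrounding chains, the stable manifolds of $F$ along $E_\star$ (which exist and are unique because the eigenvalues of $dF$ at each point of $E_\star$ are $1$ and $\alpha$) glue with the $\gamma_i$-invariant local foliations $\{z_i=\mathrm{const}\}$ to produce a global holomorphic foliation $\mathcal{F}$ transverse to $E_\star$. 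Straightening $\mathcal F$ leafwise by Koenigs linearization as in the last step of the proof of Theorem~\ref{thm:HJcurvefixedpoints}, I endow a neighborhood $U$ of $E_\star$ with the structure of an orbibundle $\pi:U\to E_\star$ of negative degree, with zero section $E_\star$, on which $F$ acts by fibrewise multiplication by $\alpha$. Applying Theorem~\ref{thm:class-orbiline} to write $U=U'/G$ as the quotient of a genuine line bundle $L'\to S'$ by a finite group acting linearly, and recovering the original $f$ by undoing the iterations above (which amounts to enlarging $G$ by a cyclic factor implementing the action of $f$ modulo $f^N$), yields precisely the data of Example~\ref{nrm:bundle}.

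The main obstacle will be the construction of the orbibundle structure in case (ii): globally identifying the transverse multiplier as a single constant $\alpha$, assembling the local leaves furnished by Theorem~\ref{thm:HJcurvefixedpoints} into a coherent holomorphic foliation through the singular points of $Y$, and checking that the resulting trivialization cocycles are $\gamma_i$-equivariant so as to define a bona fide orbibundle in the sense of Section~\ref{sec:orbifold}. This rests on compactness and connectedness of $E_\star$ together with fibrewise Koenigs linearization. Once the orbibundle is built, the descent to Example~\ref{nrm:bundle} via Theorem~\ref{thm:class-orbiline} is essentially formal, and combined with the outcome of case~(i) it exhausts the two constructions in the statement.
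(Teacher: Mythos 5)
Your proposal follows the paper's architecture closely up through the orbibundle construction, and Case~(i) is essentially the paper's argument (lift to $(\C^2,0)$ via Proposition~\ref{thm:HJcontraction}, classify $\gamma$-equivariant linear parts, then $\Gamma$-equivariant Poincar\'e--Dulac, Theorem~\ref{thm:PDcommutingGammadiag} and Remark~\ref{rmk:invariantconjugation}). Your observation that the transverse multiplier $\alpha$ is globally constant along $E_\star$ is a correct point the paper leaves implicit (it follows because the transition cocycles~\eqref{e:patch} are linear in $w$). However, the final sentence of Case~(ii) contains a genuine gap.

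The issue is the step ``recovering the original $f$ by undoing the iterations above (which amounts to enlarging $G$ by a cyclic factor implementing the action of $f$ modulo $f^N$).'' This is not the right mechanism, and it is exactly where the paper does the hardest work. Knowing that $f^N$ lifts to the orbibundle $L\to E'_\star$ as fibrewise multiplication by $\alpha$ does \emph{not} formally imply that $f$ itself is of the form in Example~\ref{nrm:bundle}; nor is $f$ a finite-order map that could be absorbed into $G$ --- it is a contracting automorphism. What one must actually establish is that $F$ (not merely $F^N$) extends to an automorphism of the \emph{entire} total space of $L$ acting linearly between fibres over a finite-order base automorphism. The paper does this by patching together infinitely many copies of the fundamental domain $X'\setminus F(X')$ to build a non-compact surface $\mathfrak{X}$ carrying an extension $\mathfrak{F}$ of $F$, extending the stable foliation to $\mathfrak{X}$, and showing each leaf is a nested union of annuli glued by $F^N$ and is therefore biholomorphic to $\C$. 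Since a fibre $\cong\C$ has a distinguished linear structure (unique up to homothety, once the origin at $E'_\star$ is fixed) and $\mathfrak{F}$ is a biholomorphism between leaves, it follows that $\mathfrak{F}$ acts linearly between fibres. One then constructs the isomorphism $\Phi=F_\alpha^{-k}\circ\mathfrak{F}^{Nk}$ for $k\gg0$ to identify $\mathfrak{X}$ with $L$. Without some argument of this type --- and the crucial point that the leaves are copies of $\C$, not just disks, is what makes ``$f$ acts linearly on fibres'' true --- your proposal only proves the theorem for $f^N$ with $N$ sufficiently divisible, not for $f$. You should replace ``enlarging $G$'' with this extension-and-linearization argument (or an equivalent one via $\bigcup_k F_\alpha^{-k}(U)=L$ and the fact that an injective entire self-map of $\C$ fixing $0$ is linear).
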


Observe that this result implies Theorem~A.

\begin{proof}
Start with $Y$ a complex surface having a (unique) isolated normal singularity at $0$, and $f : Y \to Y$
an automorphism fixing $0$ such that $f(Y)$ is relatively compact in $Y$ and $\bigcap_{n=0}^\infty f^n(Y)=\{0\}$.
We fix a resolution of singularities $\pi : X \to Y$, such that $W:= \pi^{-1}(0)$ has simple normal crossings and $f$ lifts to a holomorphic map $F: X \to X$. The existence of such $\pi$ is guaranteed by Proposition~\ref{prop:regularization}.

Apply~Theorem \ref{thm:betterstarshaped} (to an iterate $F^N$ that fixes any irreducible component of $W$).
Up to equivalence of admissible data, we have two possibilities.

\medskip
\noindent {\bf Case 1}.
The exceptional locus $W$ is a chain of rational curves. By Proposition~\ref{thm:HJcontraction}  $(Y,0)$ is isomorphic to $(\C^2,0)/\langle \gamma \rangle$ for some automorphism $\gamma$ of finite order.
Denote by $\mu:(\C^2,0) \to (Y,0)$ the natural projection.
Then $f$ lifts to a local automorphism  $\wt{f}:(\C^2,0) \to (\C^2,0)$ such that $\mu \circ \wt{f} = f \circ \mu$ and  $\wt{f} \circ \gamma = \gamma^k \circ \wt{f}$ for a suitable $k \in \N$.
Observe that for any open neighborhood $U$ of the origin,  $\wt{f}^N(U)$ is relatively compact in $U$ for $N$ sufficiently large,
hence $\wt{f}$ is a contracting automorphism.
We now conclude  in this case by giving the classification of those attracting germs commuting with the group generated by $\gamma$.
\begin{prop}
Let $\gamma:(\C^2,0) \rightarrow (\C^2, 0)$ be an automorphism of finite order, and $\wt{f}:(\C^2,0) \rightarrow (\C^2, 0)$ a contracting automorphism satisfying  $\wt{f} \circ \gamma = \gamma^k \circ \wt{f}$ for some $k \in \N$. Then in suitable holomorphic coordinates
we are in the situation of Example~\ref{nrm:HJ}.
\end{prop}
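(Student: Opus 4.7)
The plan is to first put $\gamma$ into a standard form, then pin down the linear part of $\wt f$ using the intertwining relation, and finally apply an equivariant Poincar\'e--Dulac normal form.

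First I would linearize $\gamma$. Since $\langle\gamma\rangle$ is a finite subgroup of $\aut(\C^2,0)$, a classical theorem of H.~Cartan provides a $\langle\gamma\rangle$-equivariant holomorphic change of coordinates that sends $\gamma$ to $d\gamma(0)$. Because the singularity obtained as the quotient is isolated by assumption, $\gamma$ acts freely on $\C^2\setminus\{0\}$, and elementary linear algebra lets us diagonalize $d\gamma(0)$ as $(z,w)\mapsto(\zeta z,\zeta^q w)$ with $\zeta$ a primitive $m$-th root of unity and $\gcd(q,m)=1$.

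Next I would analyze $A:=d\wt f(0)$ using the relation $A\gamma=\gamma^k A$. Writing $A=(a_{ij})$ and comparing entries gives four conditions of the form $a_{ij}(\zeta^{e_{ij}}-\zeta^{e'_{ij}})=0$ in the eigenvalues of $\gamma$ and $\gamma^k$. Taking into account that $A$ is invertible with spectrum in the open unit disk (since $\wt f$ is contracting), one finds exactly three possibilities: either $k\equiv 1\pmod m$ and $A$ is diagonal (case~(a) of Example~\ref{nrm:HJ}); or $q^2\equiv 1\pmod m$, $k\equiv q\pmod m$ and $A$ is antidiagonal (case~(c)); or $\gamma$ is a scalar ($q=1$) and $k\equiv 1\pmod m$, which after a linear diagonalization of $A$ (automatically commuting with the scalar $\gamma$) reduces to case~(a).

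In the diagonal case $\wt f$ already commutes with $\gamma$, and the eigenvalues $\alpha,\beta$ of $A$ satisfy $0<|\alpha|,|\beta|<1$. I would then apply the Poincar\'e--Dulac normal form: in the absence of resonance the map is holomorphically linearizable (case~(a)); the only resonances compatible with $|\alpha|,|\beta|<1$ are of the type $\alpha=\alpha^a\beta^b$ or $\beta=\alpha^a\beta^b$ with $a+b\ge 2$, and a single such relation produces the resonant monomial $z^u$ in the $w$-component (case~(b)). The condition $q\equiv u\pmod m$ is forced by $\gamma$-invariance of the resonant monomial. To ensure the conjugating change of variables $\Phi$ commutes with $\gamma$, I would use the averaging trick: the formal conjugacy between $\wt f$ and its Poincar\'e--Dulac normal form is unique up to composition by a map commuting with the linear part, and replacing $\Phi$ by $\frac{1}{m}\sum_{j=0}^{m-1}\gamma^{-j}\Phi\gamma^j$ gives an equivariant conjugacy (this is well-defined because $\gamma$ commutes with the linear diagonal model). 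The antidiagonal case is reduced to the diagonal one by squaring: $\wt f^2$ has diagonal linear part $\alpha\beta\cdot\id$ and commutes with $\gamma$ (since $k^2\equiv q^2\equiv1\pmod m$); no resonance of the form $\alpha\beta=(\alpha\beta)^u$ can occur, so $\wt f^2$ is equivariantly linearizable. In those coordinates the antidiagonal shape of $A$ forces $\wt f$ itself to take the form of Example~\ref{nrm:HJ}(c).

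The main obstacle I expect is the equivariant Poincar\'e--Dulac step: one needs to verify that the conjugacy to the normal form can be chosen simultaneously holomorphic and $\gamma$-equivariant in the resonant diagonal case, where the averaging procedure has to be compatible with the presence of the resonant monomial. A secondary technical point is carefully extracting $\wt f$ from $\wt f^2$ in the antidiagonal case, which requires showing that the ambiguity in the square root is killed by the intertwining relation $\wt f\circ\gamma=\gamma^q\circ\wt f$ combined with $q^2\equiv 1\pmod m$.
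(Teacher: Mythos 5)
Your argument follows the same overall strategy as the paper: linearize $\gamma$ via Cartan's theorem, classify the linear part $d\wt f(0)$ from the intertwining relation to obtain the three cases of Example~\ref{nrm:HJ}, and finish with an equivariant Poincar\'e--Dulac normal form (the paper's Theorem~\ref{thm:PDcommutingGammadiag} and Remark~\ref{rmk:invariantconjugation}). Your case breakdown (diagonal, antidiagonal, scalar $\gamma$) is a reorganization of the paper's (indexed by whether $k\equiv 1\bmod m$), and the resonance analysis in the diagonal case matches. The one genuine difference is the antidiagonal case: the paper observes that the eigenvalues of $d\wt f(0)$ are opposite, so there is no resonance, hence $\wt f$ is conjugate to its antidiagonal linear part by Poincar\'e's theorem, and then invokes Remark~\ref{rmk:invariantconjugation} to make the conjugacy $\gamma$-equivariant. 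You instead square $\wt f$, note that $\wt f^2$ has scalar linear part $\alpha\beta\cdot\id$ and commutes with $\gamma$ honestly (since $k^2\equiv q^2\equiv 1$), equivariantly linearize $\wt f^2$, and conclude. Your worry about ``extracting a square root'' is not where the subtlety lies: in the linearizing coordinates $\wt f$ commutes with the homothety $\alpha\beta\cdot\id$, whose ratio has modulus $<1$ and hence is not a root of unity, which immediately forces $\wt f$ to be linear; the intertwining relation then forces it to be antidiagonal. So your route is correct, merely a step longer than the paper's. Two small slips worth noting: first, only resonances of the form $\beta=\alpha^u$ (with $|\beta|\le|\alpha|$) can actually occur, not both types you list; second, in the scalar-$\gamma$ case $d\wt f(0)$ need not be diagonalizable, and if it is a Jordan block that is precisely Example~\ref{nrm:HJ}(b) with $u=1$, which the Poincar\'e--Dulac argument covers without a preliminary diagonalization.
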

\begin{proof}
By a Theorem of Cartan we  may suppose $\gamma$ is linear, and write it under the form $\gamma(x,y)=(\zeta x, \zeta^q y)$, where $\zeta$ is a primitive $m$-th root of unity ($m\ge2$), and $\gcd\{m,q\}=1$.
Observe that we may choose $k$ prime with $m$.
If we write
$$
d\wt{f}(0)=
\left(
\begin{array}{cc}
a & b \\
c & d
\end{array}
\right)$$
then a direct computation shows
$$
d\wt{f}(0) \circ \gamma = 
\left(
\begin{array}{cc}
\zeta a & \zeta^q b \\
 \zeta c & \zeta^q d
\end{array}
\right) \text{ and }
\gamma^k \circ d\wt{f}(0)
=
\left(
\begin{array}{cc}
 \zeta^k a &  \zeta^k b \\
\zeta^{qk} c & \zeta^{qk} d
\end{array}
\right),
$$
and since $d\wt{f}(0) \circ \gamma = \gamma^k \circ d\wt{f}(0)$ we infer that
\begin{enumerate}
\item either $k \equiv q \equiv 1 \mod m$;
\item or $k \equiv 1  \mod m$, $ q\not\equiv 1 \mod m$, and $b=c=0$;
\item or $k \not\equiv 1 \mod m$,  $k \equiv q \mod m$, $q^2 \equiv 1 \mod m$, and $a=d=0$.
\end{enumerate}

In cases (a) and (b) we have $k \equiv 1 \mod m$, so that we can  directly use Theorem~\ref{thm:PDcommutingGammadiag}.
In case (c),  the eigenvalues of $d\wt{f}(0)$ are opposite, hence there is no resonance, and
the Poincar\'e-Dulac normal form is linear. By Remark~\ref{rmk:invariantconjugation} $\wt{f}$ can be conjugated to a linear map
$(\alpha w, \beta z)$ by a change of coordinates that preserves the cyclic group generated by $\gamma$. This concludes the proof.
\end{proof}

\medskip
\noindent {\bf Case 2}.
Suppose $W$ is not a chain of rational curves.
By Theorem \ref{thm:betterstarshaped}, there exists an irreducible component $E_\star$ of $W$ such that $F|_{E_\star}$ has finite order, and the closure of $W \setminus E_\star$ is a finite number of chains of negative rational curves.

\smallskip

We assume first that $F|_{E_\star}=\id$.

By contracting all chains of rational curves, we obtain an admissible data $(X', E'_\star, F')$, where $F'|_{E'_\star} = \id$.
Observe that $X'$ now admits (Hirzebruch-Jung) singularities $p_1, \ldots, p_r \in E'_\star$ each of which is an  image of a contracted chain.

To understand the local situation at a point $p\in E'_\star$ we  apply Theorem~\ref{thm:HJcurvefixedpoints}. 
\begin{enumerate}
\item
If $p\neq p_j$ for all $j$, then $X'$ is smooth at $p$ and there is neighborhood $V_p$ of $p$ in $X'$ and an analytic diffeomorphism $\Phi_p : \D \times \D\to V_p$ sending $\{ w =0 \}$ to  $E'_\star$ and conjugating $F'$ to $(z,w) \mapsto (z , \alpha w)$ for some $|\alpha| <1$.
\item
When $p =p_j$ is a cyclic quotient singularity, we may find  a neighborhood $V_p$ of $p$ and a finite Galois cover 
$\Phi_p : \D \times \D \to  V_p$ with Galois group generated by
$(z,w) \mapsto (\zeta_j z, \zeta_j^{q_j} w)$ with $\zeta_j$ a primitive $m_j$-th root of unity, and $q_j$ prime with $m_j$. In this situation $E'_\star$ is the image under the natural projection map of $\{ w=0\}$ and as before $F'$ lifts to an automorphism $(z,w) \mapsto (z , \alpha w)$ for some $|\alpha| <1$.
\end{enumerate}
We may (and shall) assume that for any $p\neq q$ the intersection $V_p \cap V_q \cap E'_\star$ is simply connected and does not contain
any of the points $p_j$. It follows that the map $\Phi_{q}^{-1}\circ \Phi_p$ is always well defined in a neighborhood of 
$\Phi_p^{-1} ( V_p \cap V_q \cap E'_\star)$  and is unique up to a (pre- or post-)composition by a finite order linear automorphism.

We also observe that in both cases the foliation induced by $\{ z = \text{const}\}$ is intrinsically defined, since it coincides with the stable foliation of $F$, and  leaves are stable complex curves of points lying in $E'_\star$. It follows that  the transition map $\Phi_{q} \circ \Phi_p^{-1}$ for any two points $p\neq q$ can be written under the form 
$\Phi_{q}^{-1} \circ \Phi_p (z,w) = (\Phi_{q,p}(z), \star )$. Since the restriction of this map to any vertical line 
$\{ z = \text{cst}\}$ commutes with $w \mapsto \alpha w$ we actually get  
\begin{equation}\label{e:patch}
\Phi_{q}^{-1} \circ \Phi_p (z,w) = (\Phi_{q,p}(z), H_{q,p}(z) \cdot w)~,
\end{equation}
for some nowhere vanishing holomorphic map $H_{q,p}$.

To interpret geometrically what is going on, we note that $E'_\star$ is smooth, and we endow it with the orbifold structure
such that  $\mult (p) = m_j$ if $p= p_j$ as in the discussion (b) above, and $\mult(p) =1$ otherwise. We may then construct
an analytic space $L$ by patching together $\{ (V_p \cap E'_\star) \times \C\}$ for all $p\in E'_\star$ using the transition maps given by~\eqref{e:patch}. The natural projection map $L \to E'_\star$ makes $L$ into an orbibundle and we have a natural embedding $X' \to L$ such that $E'_\star$ appears as the zero section of the orbibundle. By construction $F'$ acts linearly on each fiber of $L$.

We now claim that the orbifold structure on $E'_\star$ is good. Indeed if this is not the case, then $E'_\star$ is a rational curve 
with one or two multiple points and it follows that $(Y,0)$ admits a resolution of singularities such that the exceptional divisor is a chain of rational curves. We thus fall into Case (1) which has been already treated. 

Since $E'_\star$ is a good orbifold, the orbibundle $L\to E'_\star$ is a global quotient of a genuine holomorphic line bundle on a compact Riemann surface by a finite group acting linearly on the fiber, see Theorem~\ref{thm:class-orbiline}. Observe that $F'$ lifts to the line bundle since it acts linearly on the fibers of $L$, so that we are in the situation of the Example~\ref{nrm:bundle}.

Finally we go back to the original setting, and suppose $F|_E$ is not necessarily the identity but has finite order equal to $N\ge2$.
We argue as above with the map $F^N$ thereby conjugating this map to a linear map acting on an orbibundle $L \to E'_\star$
by multiplication by some $\alpha$ with $|\alpha| <1$.
Since the fibers of $L$ are also the stable manifolds of the periodic points of $F$ (i.e. of the points lying in $E'_\star$), it follows $F$ preserves these fibers.

We claim that $F$ extends as an automorphism of the total space of the orbibundle $L$ acting linearly on the fibers. 
To see this recall that $X'$ is a neighborhood of the zero section of $L$ and the closure of $F(X')$ is relatively compact in $X'$. 
It follows that $X' \setminus F(X')$ is a fundamental domain for the action of $F$ on $X'\setminus E'_\star$.
We may thus take an infinite number of copies $X_i$ of $X' \setminus F(X')$ indexed by $i\in \N^*$ and using $F$ patch 
the "inner" boundary corresponding to $\partial F(X')$ in $X_i$ to the "outer" boundary  corresponding to $\partial X'$ in $X_{i-1}$.
By adding $X_0=F(X')$ to $X_1$ we obtain an analytic space $\mathfrak{X}$  that contains naturally
$X'\cong X_1$ and is endowed with an automorphism $\mathfrak{F} :\mathfrak{X} \to \mathfrak{X}$, given as the shift $\on{id}:X_i \to X_{i-1}$ for $i \geq 2$, and whose restriction to $X'$ equals $F$.

Since $\mathfrak{F} (X_i) = X_{i-1}$ for $i \in \N^*$, for any $p\in \mathfrak{X}$ the point $\mathfrak{F}^n(p)$ eventually belongs to $X'$.
Since the stable foliation in $X'$ is $F'$-invariant, it can be extended to an $\mathfrak{F}$-invariant holomorphic foliation to $\mathfrak{X}$.
The intersection of any leaf $\mathcal{L}$ of $\mathcal{F}$ with $X_i$ is an annulus $A_i$. Since $F^N = \id$, the annuli $A_i$ and $A_{i+N}$ are analytically diffeomorphic, which implies $\mathcal{L}$ to be isomorphic to $\C$. Observe that there exists a unique isomorphism $\mathcal{L} \simeq \C$ sending $\mathcal{L}\cap E'_\star$ to $0$ up to a homothety, and $\mathfrak{F}$ acts as a linear transformation from a leaf $\mathcal{L}$ to its image.

Denote by $F_\alpha$ the map on the total space of $L$ obtained by multiplication by $\alpha$ on each fiber. 
Using the two natural embeddings of $X'$ in $\mathfrak{X}$ and $L$, we can construct
a natural  map $\Phi: \mathfrak{X} \to L$ defined by $\Phi (p) := F_\alpha^{-k} \circ \mathfrak{F}^{Nk}(p)$ for any $k$ large enough.
It is not difficult to check that $\Phi$ is an isomorphism sending $\mathcal{F}$ to the fibration $L\to E'_\star$ which conjugates $\mathfrak{F}$ to $F$ in the neighborhood of the zero section of $L$.
We have thus proved that $F$ extends to an automorphism of the total space of $L$ acting linearly on the fibers as required.

As above, we may assume that $E'_\star$ is a good orbifold isomorphic to a quotient of a  finite group $G$ acting freely on a Riemann surface $S$. Moreover we may suppose there exists a holomorphic line bundle $L_S \to S$ and a linear action of $G$ on $L$ lifting the one on $S$ such that $L$ is isomorphic to $L_S /G$. 
Since $S \to E'_\star$ is a Galois unramified cover (in the sense of orbifold), the finite order automorphism $F|_{E'_\star}$ lifts to a finite order automorphism $F_S : S \to S$. And since $F$ acts linearly on $L$ and $F|_{E'_\star}$ can be lifted to $S$, the map $F$ also lifts to $L_S$ and acts linearly on the fibers.
This finally proves that we are in the situation of the Example~\ref{nrm:bundle}.
\end{proof}

\section{The orbit space of a contracting automorphism}

In this section, we prove  Corollary~\ref{cor:mainsurfaces}. Recall that the orbit space 
$S(f) := (Y \setminus \{ 0 \} )/ \langle f \rangle$ of a contracting automorphism
$f : (Y,0) \to (Y,0)$ is a compact complex surface. Our aim is to describe its geometry.

To do so we rely on the next observation whose proof is left to the reader.

\begin{lem}\label{lem:coverings}
Suppose $f : (Y,0) \to (Y,0)$ is a contracting automorphism of a complex normal surface singularity.
For any integer $N\ge1$,  the natural map   $S(f^N) \to S(f)$ is an $N$-cyclic unramified (Galois) covering.
\end{lem}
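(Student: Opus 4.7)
The plan is to realize $S(f)$ and $S(f^N)$ as quotients of $Y \setminus \{0\}$ by the natural equivalence relations generated by $f$ and $f^N$ respectively, and then deduce the covering statement from the elementary fact that $\langle f^N \rangle$ is a normal subgroup of $\langle f \rangle$ with cyclic quotient $\Z/N\Z$.

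First I would set up the equivalence relations. On $Y \setminus \{0\}$ declare $p \sim_f q$ iff there exist integers $n, m \ge 0$ with $f^n(p) = f^m(q)$, and similarly for $\sim_{f^N}$. Transitivity follows from the injectivity of $f$ by the standard trick of applying further iterates on both sides, so these are genuine equivalence relations with respective quotients $S(f)$ and $S(f^N)$. Since $\sim_{f^N}$ refines $\sim_f$, there is a natural surjection $\pi : S(f^N) \twoheadrightarrow S(f)$. The quotient spaces are bona fide compact complex surfaces because $\langle f \rangle$ acts properly discontinuously on $Y \setminus \{0\}$ with compact fundamental domain $\overline{U} \setminus f(U)$, where $U$ is a neighborhood of $0$ with $f(U) \Subset U$ and $\bigcap_n f^n(U) = \{0\}$.

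Second, I would verify that $\pi$ has exactly $N$ elements in every fiber. For any $[q]_f \in S(f)$, the $N$ classes $[q]_{f^N}, [f(q)]_{f^N}, \ldots, [f^{N-1}(q)]_{f^N}$ exhaust the fiber, and they are pairwise distinct: otherwise one would have $f^a(q) = f^b(q)$ with $a \not\equiv b \pmod N$, forcing $f^{\min(a,b)}(q)$ to be a periodic point of $f$, which is impossible in $Y \setminus \{0\}$ since $f^n(q) \to 0$ by the contracting property. The same argument shows that the natural homomorphism $\Z/N\Z \to \aut(S(f^N))$ sending $1$ to the biholomorphism induced by $f$ is well defined and yields a free action; by construction $\pi$ is the quotient map by this action. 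Finally, the quotient of a smooth complex surface by a free action of a finite cyclic group is an unramified Galois covering with cyclic Galois group, giving the conclusion.

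The only mild subtlety I anticipate is accommodating the fact that $f^{-1}$ is not globally defined on $Y$, which is why the equivalence relations have to be phrased via positive iterates only (rather than as orbits of a global $\Z$-action); this is in the same spirit as the extension-by-gluing construction used in the proof of Theorem~\ref{thm:class-admissibledata}. Once this bookkeeping is done, the remainder is a routine application of covering-space theory.
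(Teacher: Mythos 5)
Your proof is correct, and the paper itself omits the argument (``whose proof is left to the reader''), so there is nothing to compare it against. You handle the one genuine subtlety well: since $f^{-1}$ is not globally defined, $\langle f\rangle$ is not literally a group acting on $Y\setminus\{0\}$, and phrasing things via the equivalence relation $p\sim_f q\iff f^n(p)=f^m(q)$ for some $n,m\ge 0$ is exactly the right fix. The counting argument for the fiber, the freeness of the induced $\Z/N\Z$-action (both resting on the absence of periodic points in $Y\setminus\{0\}$, a direct consequence of the contracting hypothesis), and the identification of $\pi$ with the quotient by this action are all sound. One tiny cosmetic remark: transitivity of $\sim_f$ requires only forward iteration, not injectivity of $f$ per se — injectivity is what you actually need for the ``$N$ distinct classes'' step, where you cancel a common power of $f$ to produce a periodic point; you might reassign that sentence accordingly for clarity.
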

A generator of the Galois group of the covering is given by the map induced by $f$ on $S(f^N)$.

\medskip

\begin{proof}[Proof of Corollary \ref{cor:mainsurfaces}]
We pick a contracting automorphism $f : (Y,0) \to (Y,0)$ of a complex normal surface singularity and we apply
Theorem~\ref{thm:class-admissibledata}.

Suppose we are in the situation described in the Example~\ref{nrm:HJ}.
In this case, $(Y,0)$ is a cyclic quotient singularity.
We can thus write $Y = \C^2 / \Gamma $ for some finite subgroup of $\textup{GL}(2,\C)$ acting freely on $\C^2 \setminus \{ 0 \}$,
and $f$ lifts to a polynomial automorphism $\wt{f}: \C^2 \to \C^2$ such that $\wt{f}^n (p) \to 0$ for all $p\in \C^2$.
It follows that the surface $S(\wt{f})$ is a primary Hopf surface, see~\cite[IV.18]{barth-hulek-peters-vanderven:compactcomplexsurfaces}.
The natural projection map $\C^2 \setminus \{0\} \to Y \setminus \{0\}$ induces a cyclic Galois covering $S(\wt{f}) \to S(f)$ and $S(f)$ is a secondary Hopf surface.
The universal cover of these surfaces is isomorphic to $\C^2 \setminus \{ 0 \}$, and $b_1 (S(f)) = 1$ which implies them to be non-K\"ahler.
This proves (a) and (b) in this case.

\smallskip

Suppose now that $Y$ is  obtained as the contraction of the zero section of a line bundle $L \to S$  of negative degree over a compact Riemann surface, that is we are in  Example~\ref{nrm:2}.
Then a suitable iterate of $f$ is induced by the bundle automorphism $F: L \to L$ acting by multiplication by some $|\alpha| <1$  on each fiber of $L$.
If $E$ denotes the zero section of $L$, then the orbit space $S(f)$ is isomorphic to the quotient space $(L  \setminus E) / \langle F \rangle$.
The latter space is an principal elliptic fibre bundle\footnote{In fact any principal elliptic fibre bundle arises as follows (with $L$ possibly of non-negative degree), see~\cite[Proposition V.5.2]{barth-hulek-peters-vanderven:compactcomplexsurfaces}.} over $E$.

Since $L$ has negative degree, its dual $L^{-1}$ can be endowed with a hermitian metric $h$ whose curvature is a smooth closed positive $(1,1)$-form $\om$ on $E$.
In a local trivialization of $L$ over an open subset $z \in U \subset E$, one can write $|\cdot |_h = e^{ -\varphi (z)} \, | \cdot |$ and $\om = dd^c \varphi$ hence $\varphi$ is psh.
Over $U$, the dual metric $h^*$ on $L$ writes as follows $|\cdot |_{h^*} = e^{ \varphi (z)} \, | \cdot |$.
We may thus define a psh function $\psi (z,w) := \log |w|_{h^*}$ on the total space of $L$ which satisfies the relation $\psi \circ F = \psi + \log | \alpha|$.
We conclude that $S(f)$ carries a positive closed $(1,1)$ current $T :=  dd^c \psi$ which is exact $T = d ( d^c \psi)$ whence $S(f)$ cannot be K\"ahler.
This proves (a) and (b) in this case too. 

\smallskip

In the case of Example~\ref{nrm:3}, $Y$ is obtained as before by contracting the zero section of a genuine line bundle over a Riemann surface but only an iterate $f^N$ of $f$ is induced by a bundle automorphism.
However $f$ lifts to a map on $L$ that preserves the fibers.
From Lemma~\ref{lem:coverings} we conclude that $S(f^N) \to S(f)$ is a cyclic Galois cover preserving the elliptic fibration as required.

\smallskip

Finally in the general case, we know that there exists a finite group $G$ acting linearly on a line bundle $L \to S$ of negative degree as before, that $Y \setminus  \{0 \}$ is the quotient of $L \setminus E$ by $G$, and that an iterate $f^N$ of $f$ lifts to a linear automorphism of $L$.
It follows that $S(f^N)$ (hence $S(f)$ by Lemma~\ref{lem:coverings}) admits a Galois unramified covering from an principal elliptic fiber bundle. 
This concludes the proof of (a) and (b).

\smallskip

We now prove (c). If $(Y,0)$ is  a cyclic quotient singularity, then we have seen that $S(f)$ is a Hopf surface
hence $\kod(S(f)) = -\infty$.
Suppose $S(f)$ is a principal elliptic bundle over a base $E$.
If the base is a rational curve, then~\cite[Theorem V.5.4]{barth-hulek-peters-vanderven:compactcomplexsurfaces} implies $S(f)$ to be a Hopf surface.
If the genus of $E$ is greater or equal to $2$, then $\kod(S(f)) =1$ by~\cite[Proposition V.12.5 (ii)]{barth-hulek-peters-vanderven:compactcomplexsurfaces}.
Finally suppose $E$ is an elliptic curve.
By \cite[\S V.5]{barth-hulek-peters-vanderven:compactcomplexsurfaces}, $S(f)$ is either a Kodaira surface or a complex $2$-dimensional torus.
Since $S(f)$ is not K\"ahler, the latter case cannot appear.

\smallskip

In general, $S(f)$ admits a finite Galois unramified cover from a principal elliptic bundle.
We conclude the proof noting that the Kodaira dimension is preserved under finite unramified covers, and that 
both classes of Hopf and Kodaira surfaces are also stable under finite unramified covers.
\end{proof}

\appendix

\section{Poincar\'e-Dulac normal forms for invariant germs}

Our aim is to given normal forms for contracting automorphisms of $ (\C^d,0)$ commuting with a finite group action. 
Our result is a discrete analog of the results obtained by Sanchez-Bringas~\cite{sanchezbringas:normalformsinvariantvectorfields} in the case of holomorphic vector fields.

Let $f:(\C^d,0)\rightarrow (\C^d,0)$ be an attracting fixed point germ, and $\Gamma$ be a finite subgroup of $\GL(d,\C)$.
We shall say that $f$ \emph{commutes} with $\Gamma$ if  $f \Gamma = \Gamma f$, i.e. if
there exists a group isomorphism $\rho : \Gamma \to \Gamma$ such that 
$f \circ \gamma = \rho( \gamma) \circ f$ for all $\gamma \in \Gamma$.
A stronger condition is that $f$ commutes with \emph{all} elements of $\Gamma$ which is the case exactly when $\rho = \id$.

We call a group \emph{diagonalizable} if it is conjugated to a subgroup of the group of diagonal matrices.
Observe that any finite abelian group is diagonalizable.

Finally we recall the definition of a Poincar\'e-Dulac normal form.
\begin{defi}
Let $f:(\C^d,0) \rightarrow (\C^d,0)$ an attracting invertible germ. Denote by $\lambda=(\lambda_1, \ldots, \lambda_d)$ the  eigenvalues of $df(0)$ counted with multiplicities.
A monomial $x^n=x_1^{n_1} \cdots x_d^{n_d}$ with $n_1 + \cdots + n_d \geq 2$ is called \emph{resonant} for the $k$-th coordinate if $\lambda^n=\lambda_k$.

The germ $f$ is in \emph{Poincar\'e-Dulac normal form} if $df(0)$ is in Jordan normal form and the map $f(x) - df(0)\cdot x$ admits only resonant monomials.
\end{defi}

\begin{thm}\label{thm:PDcommutingGammadiag}
Let $\Gamma$ be a finite diagonalizable subgroup of $\GL(d,\C)$.
Suppose $f:(\C^d,0) \rightarrow (\C^d,0)$ is an attracting automorphism that commutes with all
elements of $\Gamma$.

Then there exists coordinates $x=(x_1, \ldots, x_d)$ at $0$ such that any element $g \in \Gamma$ acts as a diagonal linear map, and $f$ is in a Poincar\'e-Dulac normal form that commutes with all elements of $\Gamma$.
\end{thm}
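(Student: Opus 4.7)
The strategy is to adapt the classical Poincar\'e-Dulac procedure to the equivariant setting. Since $\Gamma$ is finite and diagonalizable, it is abelian, and I may choose linear coordinates on $\C^d$ in which every $g \in \Gamma$ acts as $\on{diag}(\chi_1(g),\ldots,\chi_d(g))$ for characters $\chi_i : \Gamma \to \C^*$. This yields a weight space decomposition $\C^d = \bigoplus_\chi V_\chi$ indexed by the characters appearing in $(\chi_1,\ldots,\chi_d)$.

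The next step is to bring $A := df(0)$ to Jordan normal form without destroying the diagonality of $\Gamma$. Because $f$ commutes with every $g \in \Gamma$, so does $A$, and hence $A$ stabilizes each weight space $V_\chi$. Choosing a Jordan basis for $A|_{V_\chi}$ inside each $V_\chi$ produces a change of coordinates that is block-diagonal with respect to the weight decomposition; it puts $A$ in Jordan form while keeping all elements of $\Gamma$ diagonal. The characters $\chi_i$ are of course permuted/relabelled within isotypic components, but not altered.

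Now I would run the classical Poincar\'e-Dulac induction degree by degree, restricted to $\Gamma$-equivariant polynomial maps. At each degree $k \ge 2$, write $f = A + g_k + O(|x|^{k+1})$ with $g_k$ homogeneous of degree $k$; since $f$ and $A$ are $\Gamma$-equivariant, so is $g_k$. I seek a $\Gamma$-equivariant change of coordinates of the form $h(x) = x + \phi(x)$ with $\phi$ homogeneous of degree $k$, eliminating the non-resonant part of $g_k$. The cohomological equation $L_A(\phi) := \phi \circ A - A \circ \phi \equiv g_k$ modulo resonant terms is to be solved inside the subspace $V_k^\Gamma$ of $\Gamma$-equivariant homogeneous polynomial maps of degree $k$. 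Because $A$ commutes with $\Gamma$, the operator $L_A$ preserves $V_k^\Gamma$, and its cokernel on $V_k^\Gamma$ is spanned by those monomials $x_1^{n_1}\cdots x_d^{n_d} e_j$ that are simultaneously resonant ($\lambda^n = \lambda_j$) and $\Gamma$-equivariant ($\chi_1^{n_1}\cdots\chi_d^{n_d} = \chi_j$). These are exactly the terms one allows in a $\Gamma$-equivariant Poincar\'e-Dulac normal form.

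The main subtlety is ensuring that the composition of the polynomial changes of coordinates converges to a genuine analytic diffeomorphism near $0$. This is guaranteed by the attracting hypothesis: all eigenvalues of $A$ have modulus $<1$, so the spectrum lies in the Poincar\'e domain where only finitely many resonances can occur, and the classical Poincar\'e-Dulac theorem yields a convergent (indeed polynomial) normalization. The equivariance amounts to a linear constraint on each homogeneous component of $\phi$, compatible with the $L_A$-action, so the standard majorant estimates are not affected. A minor point to handle carefully is the interaction between the Jordan block structure of $A$ and the weight space decomposition, but this is settled by the block-diagonal construction of step two, since $A$ already preserves each $V_\chi$.
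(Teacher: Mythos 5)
Your proposal is correct and follows essentially the same route as the paper's proof: run the Poincar\'e--Dulac induction degree by degree inside the $\Gamma$-equivariant subspace, using that the homological operator commutes with the $\Gamma$-action, and invoke the Poincar\'e-domain convergence. The paper reaches the equivariant splitting slightly differently (it first solves the cohomological equation in all of $\mathcal{H}_N$ using Rosay--Rudin's Lemma 2 and then averages the solution over $\Gamma$, rather than restricting $L_A$ to $\mathcal{H}_N^\Gamma$ from the start), and it gets convergence from the explicit limit $\tilde{f}_N^{-n}\circ f^n$ once the normal form stabilizes, but these are minor variants of the same argument.
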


\begin{rmk}\label{rmk:invariantconjugation}
Suppose there exists an analytic local diffeomorphism $\Phi:(\C^d,0)\rightarrow (\C^d,0)$ conjugating $f$ to holomorphic germ $\wt{f}$
that also commutes with $\Gamma$.  Then $f$ and $\wt{f}$ are conjugated by a diffeomorphism $\Psi$ that also commutes with $\Gamma$.

To see this introduce $\rho$ and $\tilde{\rho}$ the group automorphisms such that $f \circ g = \rho(g) \circ f$, and $\wt{f} \circ g = \tilde{\rho}(g) \circ \wt{f}$ respectively.
Then the map 
$$
\Psi=\frac{1}{\abs{\Gamma}} \sum_{g \in \Gamma} \wt{\rho}(g^{-1}) \circ \Phi \circ \rho(g)~.
$$
satisfies  $\Psi \circ g = (\wt{\rho} \circ \rho^{-1}) (g) \circ \Psi$ as required.
\end{rmk}
%

\begin{proof}[Proof of Theorem \ref{thm:PDcommutingGammadiag}]

Since $\Gamma$ is diagonalizable and $f$ commutes with all elements of $\Gamma$,  
we may assume that any $\gamma \in \Gamma$ is diagonal and $df(0)$ is in (lower triangular) Jordan normal form.
Write $f(x)=(f_1(x), \ldots, f_d(x))$,  $f_k(x)=\sum_n f_{k,n} x^n$ with  $f_{n,k} \in \C$ and $n\in\N^d$. 
If
$$
\gamma =\on{Diag} ( \zeta^{q_1}, \ldots, \zeta^{q_d}),
$$
with $\zeta$ a primitive $p$-th root of unity, and $q = (q_1, \ldots, q_d) \in \N^d$, 
then $f \circ \gamma = \gamma \circ f$ if and only if 
\begin{equation}\label{eqn:diaginvres}
q \cdot n \equiv q_k  \mod p~,
\end{equation}
for all $k,n$ such that $f_{k,n} \neq 0$.

Following the standard scheme for conjugating $f$ to a Poincar\'e-Dulac normal form, we show by induction that for any integer $N\ge1$ there exists
a Poincar\'e-Dulac normal form $\tilde{f}_N$ and a local biholomorphism $\Phi_N$ that both commute with all elements of $\Gamma$ such that $ \Phi_N \circ f - \tilde{f}_N \circ \Phi_N =  \cO(x^{N+1})$. This claim implies the theorem since it is known that for $N$ large enough $ \tilde{f}_N^{-n} \circ f^{n} $ converges to a conjugacy $\Phi$ between the two, see e.g.~\cite[p.84--85]{rosay-rudin:holomorphicmaps}, and this conjugacy commutes with all elements of $\Gamma$ by construction.

For $N=1$, pick $\tilde{f}_1 = df(0)\cdot x$ and $\Phi_1 = \id$. Suppose the claim has been proved for $N-1$. 
Denote by $\mathcal{H}_N$ the space of polynomial maps $\C^d \to \C^d$ whose coordinates are homogeneous polynomials of degree $N$, and $\mathcal{H}_N^\Gamma\subset \mathcal{H}_N$ for those commuting with all elements of $\Gamma$. These are finite dimensional vector spaces. Write $X_N$ for the subspace of $\mathcal{H}_N$ of those maps having only resonant monomials, and
$X_N^\Gamma := X_N\cap \mathcal{H}_N^\Gamma$. 

By~\cite[Lemma 2]{rosay-rudin:holomorphicmaps} any element $G\in \mathcal{H}_N$ can be written as
\begin{equation}\label{bougre}
G =  S + H \circ df(0) - df(0) \circ H
\end{equation}
for some $S \in X_N$ and $H\in \mathcal{H}_N$. If $G\in \mathcal{H}_N^\Gamma$, then we may replace $S$ and $H$ by
$\frac1{|\Gamma|} \sum \gamma^{-1} \circ S \circ \gamma$ and $\frac1{|\Gamma|} \sum \gamma^{-1} \circ H \circ \gamma$, so that 
we can solve~\eqref{bougre} with $S \in X_N^\Gamma$ and $H\in \mathcal{H}_N^\Gamma$.

Decompose $f = f_{< N} + G + \cO (|x|^{N+1})$ where $G$ is the homogeneous part of $f$ of degree $N$, 
and $f_{<N}$ contains only monomials of degree $<N$. Observe that by assumption $f_{<N}$ is in Poincar\'e-Dulac normal form.
The claim immediately follows with $\tilde{f}_N = f_{<N} + S$ and $\Phi_N := \id - H$.
\end{proof}

\bibliographystyle{alpha}
\bibliography{biblio}

\end{document}